\renewcommand{\fps@algorithm}{htbp}
\renewcommand{\ftype@algorithm}{1}
\renewcommand{\ext@algorithm}{lof}
\DeclareMathOperator{\Tr}{Tr}
\DeclareMathOperator*{\argmin}{argmin}
\begin{document}

\title{Decomposition Pipeline for Large-Scale Portfolio Optimization with Applications to Near-Term Quantum Computing}

\author{Atithi Acharya}
\author{Romina Yalovetzky}
\author{Pierre Minssen}
\author{Shouvanik Chakrabarti}
\author{Ruslan Shaydulin}
\author{Rudy Raymond}
\author{Yue Sun}
\author{Dylan Herman}
\affiliation{Global Technology Applied Research, JPMorganChase, New York, NY 10017, USA}

\author{Ruben S.~Andrist}
\affiliation{Amazon Quantum Solutions Lab, Seattle, Washington 98170, USA}

\author{Grant Salton}
\affiliation{Amazon Quantum Solutions Lab, Seattle, Washington 98170, USA}
\affiliation{AWS Center for Quantum Computing, Pasadena, CA 91125, USA}
\affiliation{California Institute of Technology, Pasadena, CA, USA}

\author{Martin J.~A.~Schuetz}
\affiliation{Amazon Quantum Solutions Lab, Seattle, Washington 98170, USA}
\affiliation{AWS Center for Quantum Computing, Pasadena, CA 91125, USA}

\author{Helmut G.~Katzgraber}
\affiliation{Amazon Quantum Solutions Lab, Seattle, Washington 98170, USA}

\author{Marco Pistoia}
\affiliation{Global Technology Applied Research, JPMorganChase, New York, NY 10017, USA}

\date{\today}

\begin{abstract}
Industrially relevant constrained optimization problems, such as portfolio optimization and portfolio rebalancing, are often intractable or difficult to solve exactly. In this work, we propose and benchmark a decomposition pipeline targeting portfolio optimization and rebalancing problems with constraints. The pipeline decomposes the optimization problem into constrained subproblems, which are then solved separately and aggregated to give a final result. Our pipeline includes three main components: preprocessing of correlation matrices based on random matrix theory, modified spectral clustering based on Newman's algorithm, and risk rebalancing. Our empirical results show that our pipeline consistently decomposes real-world portfolio optimization problems into subproblems with a size reduction of approximately 80\%. Since subproblems are then solved independently, our pipeline drastically reduces the total computation time for state-of-the-art solvers. Moreover, by decomposing large problems into several smaller subproblems, the pipeline enables the use of near-term quantum devices as solvers, providing a path toward practical utility of quantum computers in portfolio optimization.

\end{abstract}

\maketitle

\section{Introduction}

Portfolio optimization (PO) plays a vital role in managing vast amounts of global financial assets, requiring rapid and robust algorithms to effectively make decisions about investing capital, while managing risks. Often, PO problems include integer decision variables, which arise when the underlying assets are traded in discrete quantities. Moreover, such optimization problems impose constraints on the decision variables, such as having a fixed minimum and maximum number of selected assets, or a maximum total risk exposure of the portfolio. 

Typically, PO problems can be formulated as mixed-integer programming (MIP) problems, for which a wide range of methods have been proposed. In particular, various local search methods have been proposed and successfully applied~\cite{vaessens1998local, PhysRevE.92.013303, Hauke_2020}.
However, these methods often have several drawbacks when applied to MIP problems, such as scalability, the risk of getting trapped in local optima, and difficulty handling complex constraints. On the other hand, exact methods, such as those based on the \textit{branch-and-bound}~\cite{LandDoig1960, Cornuejols2006Optimization} search method, often perform better in these aspects, and are, therefore, among the most widely used tools for many hard MIP problems. 

Notably, commercially available branch-and-bound based solvers offered by CPLEX~\cite{cplex2009v12} and Gurobi~\cite{gurobi} have been widely adopted in the industry for optimization problems, such as traveling salesman, graph partitioning, and quadratic assignment problems~\cite{clausen1999branch}. These solvers employ the \textit{branch-and-cut} procedure, which effectively combines branch-and-bound with cutting-plane methods, and often reduces execution time. Despite their widespread success in solving practical problems of small and intermediate sizes, MIP problems are NP-hard in the worst case. Therefore, given the generic exponential runtime scaling and ever-increasing problem sizes for real-world PO problems, it is crucial to develop techniques that can improve the scope and scale of problems these solvers can handle.

\textbf{PO with near-term quantum computing.---} In parallel to the development of classical optimizers, quantum computing has shown promise for providing computational speedups for some optimization problems of relevance to science and industry~\cite{quant_speedup, abbas2023quantum, dalzell2023quantum}, particularly in portfolio optimization problems~\cite{herman2022survey, DAC24_review}. A variety of quantum algorithms have been proposed, including well-studied quantum heuristics like the quantum approximate optimization algorithm (QAOA)~\cite{Hogg2000,farhi2014qaoa}, which has been executed on quantum hardware for small-scale optimization problems~\cite{Harrigan2021,Pelofske2024,shaydulin2024evidence,niroula2022constrained}, including portfolio optimization~\cite{Buonaiuto2023, he2023alignment, herman2023constrained, sureshbabu2024parameter}, with up to approximately $100$ variables.

Another approach to quantum optimization is quantum annealing~\cite{Hauke_2020}, which solves optimization problems by finding the ground state of a classical Ising Hamiltonian, elevated to the quantum domain by describing a collection of interacting qubits. Like QAOA, quantum annealing has also been applied to portfolio optimization on quantum hardware~\cite{app122312288, cohen2008portfolio}. There have also been proposals for portfolio optimization based on quantum enhancements of classical algorithms. Specifically, quantum linear systems algorithms~\cite{harrow2009quantum, childs2012hamiltonian, chakraborty2018power} have been applied to solving reformulated constrained PO problems in both theory~\cite{rebentrost2018quantum} and hardware demonstrations~\cite{yalovetzky2024hybrid}. Quantum interior-point methods have also been proposed for this class of optimization problems~\cite{PRXQuantum.4.040325, kerenidis2019quantum}. Finally, although not specifically applied to portfolio optimization to date, quantum algorithms based on quantum walks~\cite{B_B_Ashley, chakrabarti2022universal} have been shown to improve branch-and-cut algorithms.

\begin{figure*}[!ht]
\includegraphics[width=\textwidth]{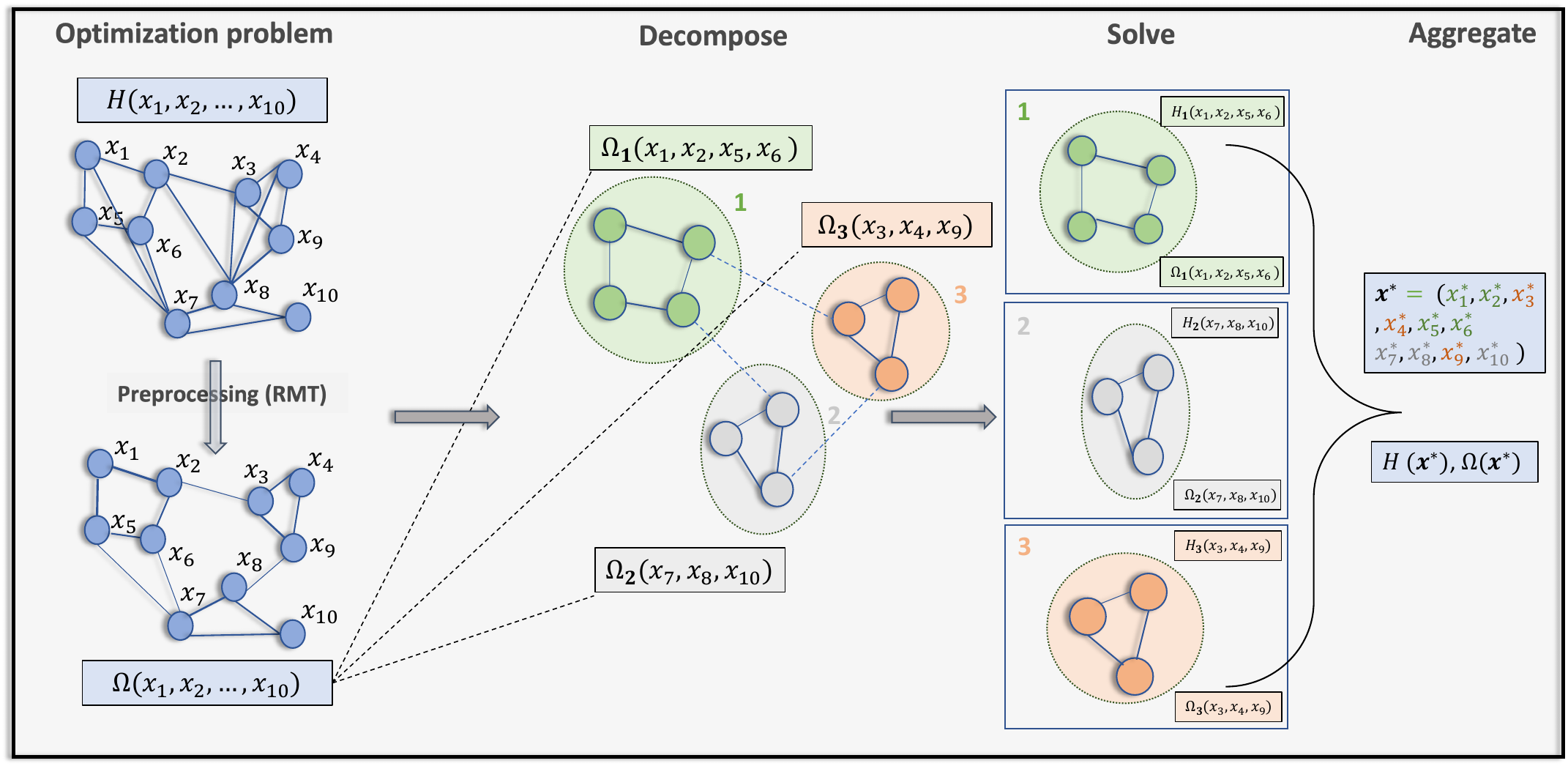}
\caption{Diagram of the decomposition pipeline to solve optimization problems, where the objective is denoted by $H$, and the constraint set is denoted by $\Omega$. Decompose: We first preprocess the correlation matrix using Random Matrix Theory (RMT). The resulting problem is then partitioned into a series of subproblems associated with the objectives $H_i$ and constraints $\Omega_{i}$, \( i={1,2,3} \). Solve: Each subproblem is solved (in parallel). Aggregate: The subproblems are aggregated and post-processed for feasibility checks into the final solution $\bm{x}^{*}$. Finally, we check the objective value $H(\bm{x}^{*})$, while ensuring feasibility of the solution vector $\Omega(\bm{x}^{*})$. Note that the aggregation technique shown is simple concatenation, but that one can generalized to more complex techniques, and that potentially different aggregated solutions could be tested.}
\label{fig:problem_figure}
\end{figure*}

A common challenge for implementing quantum algorithms on current hardware is the limited number of qubits, typically ranging from tens to hundreds on near-term devices, depending on the architecture and provider. Consequently, hardware demonstrations to date have been restricted to instances of portfolio optimization problems with a much smaller number of assets than problems solved in production, which can have thousands of variables.

Many financial assets exhibit a specific structure in which groups of assets are correlated within their communities, while being mutually anti-correlated with other communities~\cite{MacMahon_2015}. Moreover, it has been shown that this structure originates from the real market~\cite{block_po,chan1999portfolio}. These results suggest that there are opportunities for improving the performance of classical and quantum solvers by breaking down the problem into smaller subproblems based on such communities. However, to achieve this, it is not sufficient to simply apply standard community detection techniques that use correlation matrices instead of network data, as shown in Ref.~\cite{MacMahon_2015}. 

\textbf{Our contribution.---} In view of the aforementioned challenges for optimization faced by both classical and quantum computers, as well as the evidence of structure present in real-world PO problems, we propose a decomposition pipeline specifically designed to leverage this structure for PO problems, which can be generalized beyond portfolio optimization. Drawing from techniques used for community detection, our method consists of partitioning the original constrained optimization problem into subproblems of constrained optimization, which are each solved separately. The solutions to the subproblems are subsequently aggregated to output a solution to the original problem. 

The proposed decomposition pipeline consists of four steps, as shown in Fig.~\ref{fig:problem_figure}. First, we reinterpret the optimization problem by introducing a correlation \emph{graph} whose nodes correspond to assets, and whose edge weights correspond to the normalized covariance between returns of the assets. We apply preprocessing techniques based on random matrix theory to this graph in order to extract relevant signals. Second, we perform the decomposition by employing a modified spectral clustering based on Newman's method to partition the graph. The partition corresponds to segmenting the problem into smaller subproblems. Similarly, we decompose the constraint set of the original optimization problem, and we use risk rebalancing techniques before optimizing each subproblem. Third, we solve each of the subproblems individually. Finally, we aggregate the solutions of the subproblems to obtain an approximate solution to the main problem. 

Our proposed technique offers two main benefits. First, our decomposition pipeline improves the execution time of classical state-of-the-art solvers on portfolio optimization problems. We show that when utilizing the proposed decomposition pipeline and solving each subproblem with a state-of-the-art branch-and-bound based solver, the time-to-solution is significantly reduced (by at least $3\times$ for the largest problems considered in our numerical experiments, with $1500$ variables) as compared to directly solving the problem with the same solver. At the same time, for problems of sufficient size, even the decomposed subproblems become difficult to solve with classical techniques, but are are good targets for \textit{quantum} optimization. As such, the second benefit of our technique is that our pipeline contributes to the solution of portfolio optimization problems using quantum computers. Specifically, by leveraging the structure present in the data defining PO problems, we may potentially reduce the number of qubits required to implement the quantum optimization algorithm on quantum devices. We propose to utilize the structure present in typical problem instances to effectively reduce the problem size, thereby making large-scale problems compatible with near-term quantum hardware. Thus, our approach paves the way to near-term hardware demonstrations for practically-relevant applications at scale.

It is worth mentioning that decomposition techniques have been applied to a broad range of optimization problems~\cite{LEON20171334, rahmaniani2017benders}. A widely known partitioning procedure for solving MIP problems is Benders' decomposition (BD)~\cite{benders2005partitioning}, wherein the problem is decomposed into small subproblems containing only integer variables and other small linear programs containing only continuous variables. With continuous variables, graph theoretic decomposition algorithms have been explored in Refs.~\cite{port_cuts, arroyo2021dynamicportfoliocutsspectral}. In the context of near-term quantum computing, problem decomposition has been used to tackle graph clustering and graph partitioning problems on small quantum devices~\cite{shaydulincommunity,Shaydulin2019,Shaydulin20192,UshijimaMwesigwa2021}.

\section{Problem Statement and Motivation}
\label{prob_statement}

We focus on PO problems in the form of Mixed Integer Quadratically-Constrained Quadratic Programming (MIQCQP) problems. The generic form of MIQCQP is given by 
\begin{equation}
\begin{aligned}
    \min_{\bm{x}\in \{0,1\}^n} \quad & \bm{x}^T A_0 \bm{x} + 2\bm{b}_0^T\bm{x} + \kappa_{0} \\
    \text{subject to}          \quad & \bm{x}^T A_i \bm{x} + 2\bm{b}_i^T\bm{x} + \kappa_{i} \le 0, \quad \text{for }i=1,2,\ldots    
\end{aligned}\label{eqn:miqcqp}   
\end{equation}
We use the standard notation of bold lowercase letters, such as $\bm{x}$, for column vectors, uppercase letters, such as $A_i$, for matrices, and lowercase letters, such as $\kappa_{i}$, for scalars. In the above formulation, we write $\bm{x}$ as binary vectors, but it is straightforward to consider any vector $\bm{x}$ whose elements $x_i$ are integers within a predetermined range of values by, e.g., performing a binary expansion of $x_i$ and potentially adding new variables and constraints as needed. In the following, we consider two types of PO problems that can be cast as MIQCQPs. 

The first PO problem we consider is the canonical Markowitz mean-variance problem with constraints represented using integer variables. Given a universe of $n \in \mathbb{Z}$ assets to choose from, the objective is to maximize the return while minimizing the exposure to risk. We denote the vector of expected returns as $\bm{\mu} \in \mathbb{R}^{n}$, the vector of decision variables with discrete (binary or integer) elements as $\bm{x} \in \mathbb{Z}^{n}$, and the covariance matrix of returns as $\Sigma \in \mathbb{R}^{n \times n}$. For brevity, we will assume \textit{daily} returns without loss of generality, but it is straightforward to use other types of returns. The daily (rate of) return of an asset is defined as the difference between the closing price and the opening price, divided by the opening price. A standard formulation of this problem is defined by the objective function
\begin{equation}
     H(\bm{x}) := q~\bm{x}^T \Sigma \bm{x} -\bm{\mu}^T \bm{x}, \label{eqn:risk_aversed_po}     
\end{equation}
where the first term in the objective represents the exposure of risk, the second term represents the expected return, and their trade-off is governed by the risk aversion coefficient $q \in \mathbb{R^{+}}.$ The portfolio optimization task is to find a portfolio that minimizes the objective function, i.e., $\argmin_{\bm{x}\in \{0,1\}^n} H(x)$.

In practice, PO contains constraints, such as cardinality constraints on lower and upper limits of chosen assets. One can alternatively impose an equality constraint that fixes the exact number of chosen assets to be $\lceil dn\rceil$ for $0 \le d \le 1$, which can be expressed as $\mathbf{1}^T \bm{x} =dn$, where $\bm{1}$ is the all-ones vector.

In this work, we fix the number of assets with a linear equality constraint as 
\begin{equation}
\begin{aligned}
\min_{\bm{x}\in \{0,1\}^n} \quad &  q~\bm{x}^T \Sigma \bm{x}  -\bm{\mu}^T \bm{x}\\
\text{subject to} \quad &  \mathbf{1}^T \bm{x} =dn,  
\end{aligned}
\label{eqn:por_optim}
\end{equation}
where we omit the rounding of $dn$. The equality constraint $\bm{1}^T\bm{x} = dn$ is equal to two inequality constraints in Eq.~\eqref{eqn:miqcqp}; $\bm{1}^T\bm{x} - dn\le 0$ and $-\bm{1}^T\bm{x} +dn \le 0$. 

In the second PO problem we consider, one is given a (baseline) portfolio $\bm{x}_b$, and the goal is to identify a new portfolio $\bm{x}$ that is not far from $\bm{x}_b$, but which has a smaller overall risk exposure. Namely, the risk of changing $\bm{x}_b$ to $\bm{x}$ as measured by $\Sigma$ should be minimized. Typically, $\bm{x}_b$ represents a current portfolio that needs to be updated, or it could be the one based on a suggestion of subject matter experts. In this setting, the optimization problem can be formulated as
\begin{equation}
\begin{aligned}
& \underset{\bm{x}\in \mathbb{Z}^n}{\text{min}}
& & (\bm{x}-\bm{x}_b)^T \Sigma (\bm{x}-\bm{x}_b) \\
& \text{subject~to}
& & \bm{x}^T \Sigma \bm{x}~\leq~a,
\end{aligned}
\label{eqn:quad_constraint_ps}
\end{equation}
where $a$ is a scalar strictly less than $ \bm{x}_b^T \Sigma \bm{x}_b$ so that the optimal solution, if it exists, is different from $\bm{x}_b$. Notice that $\bm{x}$ is a vector of integers instead of a vector of binaries, but it is straightforward to derive a corresponding MIQCQP as shown in Appendix~\ref{subsec:app_ip2miqcqp}. 

A popular method to compute the exact solution to the cardinality-constrained PO \eqref{eqn:por_optim} is the branch-and-bound method~\cite{LandDoig1960}, which is a recursive method that explores solutions by first \textit{branching} into the subspace of the solutions of smaller subproblems, then obtaining good upper and lower \textit{bounds} that are used to skip searches on a subset of branches. For example, in Eq.~\eqref{eqn:por_optim} the branching can be done by splitting the problem into subproblems with $x_n$ fixed to either $0$ or $1$, i.e., $H_0(\bm{x})$ with $x_n=0$ and $H_1(\bm{x})$ with $x_n=1$. Each of the resulting subproblems has one less variable and can also be recursively branched. During the exploration of $H_0(\bm{x})$, one can obtain a feasible solution (upper bound), and an estimated optimal solution (lower bound) that may allow to skip exploring the branches of $H_1(\bm{x})$. The \textit{integrality gap} is defined as the difference between the lower bound, which is achievable by relaxation, and the upper bound, which is realized by integer variables. Notice that the branching can be done in arbitrary order of the subsets of decision variables, which can greatly influence the time to find optimal solutions (or, solutions with the lowest integrality gaps). 

As mentioned in the introduction, the correlations between financial assets give rise to a clear structure. We provide empirical evidence of this underlying structure originating from the real market~\cite{block_po,chan1999portfolio} that the solver leverages to find the solution more efficiently. To illustrate this, we solve the PO problem introduced in Eq.~\eqref{eqn:por_optim} with a linear constraint, constructed using both (i) market data from the S\&P~500, and (ii) randomly-generated covariance matrices (drawn from the Wishart distribution~\cite{Wishart1928}), which we discuss in Appendix \ref{appendix:numerics}, and where we compare the time-to-solution (TTS) of solving these two cases.

There are some PO problems that are especially hard empirically, potentially due to complexity in the objective or constraints. As we will show, in Sec.~\ref{sec:numerical_results} for example, the risk minimization with quadratic constraint (Eq.~\eqref{eqn:quad_constraint_ps}) formulated with market data (S\&P 500) presents a TTS higher than the simpler problem a with linear constraint presented before. However, the distinction between random covariance and S\&P 500 data is less pronounced, suggesting that classical solvers may struggle to utilize the underlying structure in problems with more complex constraints, further motivating the need for our decomposition framework.

\section{Decomposition Pipeline}
\label{sec:decomposition}

In this section we describe the proposed decomposition pipeline that leverages the structure of the covariance matrix for typical portfolio optimization problems. Our approach consists of three steps presented with details in the next subsections: preprocessing of the correlation matrix, clustering that utilizes a correlation-based modularity metric, and the partitioning of the constraints, as shown in Fig.~\ref{fig:problem_figure}. As expected, the quality of the solution from the decomposed pipeline depends on the number of clusters and the structure of the decomposed covariance matrices. We give an analytical bound on the expected decrease of the solution of the decomposed pipeline in Appendix~\ref{app:analyticsl_1}. 

\subsection{Pre-Processing of Correlation Matrices}
\label{sec:pre_processing}

In general, given time-series data, as is often the case for the returns of financial assets, the correlation matrices are constructed in the following way. Given the daily returns of $n$ different assets over a period of $\mathcal{T}$ days, we define the matrix $X_{n \times \mathcal{T}} \in \mathbb{R}^{n \times  \mathcal{T} }$, where the $i$-th row corresponding to the $i$-th asset is a vector containing the daily returns of that asset across the period of $\mathcal{T}$ days. 

In the setting of finite $n$ and $\mathcal{T} \rightarrow \infty$, given the $X_{n \times \mathcal{T}}$, the usual sample covariance estimator is $\Sigma = (1/\mathcal{T}) XX^T,$ and it converges almost surely to the true covariance $\overline{\Sigma}$, making it a strongly consistent estimator of the population covariance matrix \cite{Bun_2017}. In particular, the expected value of the Frobenius norm of the difference $\| \Sigma - \overline{\Sigma} \|_F^2$, i.e., the mean squared error (MSE), tends to zero in this limit. In practice, $\mathcal{T}$ is finite. One way of improving the maximum likelihood estimator is with shrinkage, as originally observed in Refs.~\cite{james1961estimation, Stein1986}. Another challenge is that in many practical settings, the maximum likelihood estimator is not the right choice because it comes with considerable noise in estimation. 

When faced with the challenge of estimating covariance matrices in such regimes, two strategies come to the fore: thresholding and shrinkage estimators. Thresholding serves to simplify the covariance matrix by setting small correlations, those considered to be noise, to zero, which can be beneficial in terms of computational efficiency and interpretability, but can discard potentially valuable information. On the other hand, shrinkage estimators \cite{shrinkage_LEDOIT,  LW2012, LW2015, LW2017b, LW2020} aim to improve the estimation of a covariance matrix by combining the sample covariance matrix with a structured estimator, such as the identity matrix. 

Once $\Sigma$ is estimated, preprocessing techniques are applied. Given an estimated $\Sigma$, the correlation matrix $C$ can be written directly as 
\begin{equation}
C = D^{-1/2} \Sigma D^{-1/2},
\end{equation}
where \( D \) is a diagonal matrix \( D \equiv \mbox{Diag}(\Sigma_{11}, \Sigma_{22}, \ldots, \Sigma_{nn}) \). Thus, each element of $C$ is given by 
\begin{equation}
C_{ij} = \frac{\Sigma_{ij}}{\sqrt{\Sigma_{ii} \cdot \Sigma_{jj}}}.
\end{equation}  
As shown in Fig.~\ref{fig:RMT_split}, for a correlation matrix \(C\) derived from \(n\) completely random time series of duration \( \mathcal{T}\), a component of the eigenvalue distribution follows the Marchenko-Pastur (MP) distribution \cite{1967SbMat...1..457M}, as \(n\) and \(\mathcal{T}\) are both large but with a fixed ratio $\beta=n/\mathcal{T}$. The density function of this distribution is given by,
$$
\rho(\lambda) =
\begin{cases} 
\frac{\sqrt{(\lambda_{+} - \lambda)(\lambda - \lambda_{-})}}{2\pi\lambda\beta\sigma^2}, & \text{if } \quad \lambda_{-} \le \lambda \le \lambda_{+}\\
0, & \text{otherwise},
\end{cases}
$$
where \(\lambda_{\pm} = \sigma^2\left[1 \pm \sqrt{\beta}\right]^2\) represent the maximum and minimum eigenvalues respectively. For true correlation matrices, $\sigma=1$. However, in practice, we can treat $\sigma$ to be an adjustable parameter \cite{bouchaud2009financial, BUN20171}. This is because several eigenvalues remain above $\lambda_{+}$ and carry some information, thereby decreasing the variance of the effectively random component of the correlation matrix. Similarly, $\beta$ should be treated as an adjustable parameter \cite{Bouchaud_Potters_2003} because effectively correlated samples are redundant and the sample correlation matrix should behave as if we had observed not $\mathcal{T}$ samples but an effective number $\mathcal{T}^{*}$ or written directly $\beta^{*}=(n/\mathcal{T}^{*})$. After the appropriate fits, we obtain $\lambda_{+}$ and $\lambda_{-}$ as shown in Fig.~\ref{fig:RMT_split}. The utility of the Marchenko-Pastur distribution lies in its ability to characterize those eigenvalues that are predominantly noise-induced. Specifically, eigenvalues within the interval \([\lambda_-, \lambda_+]\) are attributed to random fluctuations, whereas those exceeding \(\lambda_+\) signify significant underlying structures, often reflecting meaningful relationships in the data \cite{Bouchaud_Potters_2003}. These relevant eigenvalues are highlighted with the blue and red stars in Fig.~\ref{fig:RMT_split}. 
\begin{figure}[h!]
    \includegraphics[width=9cm]
    {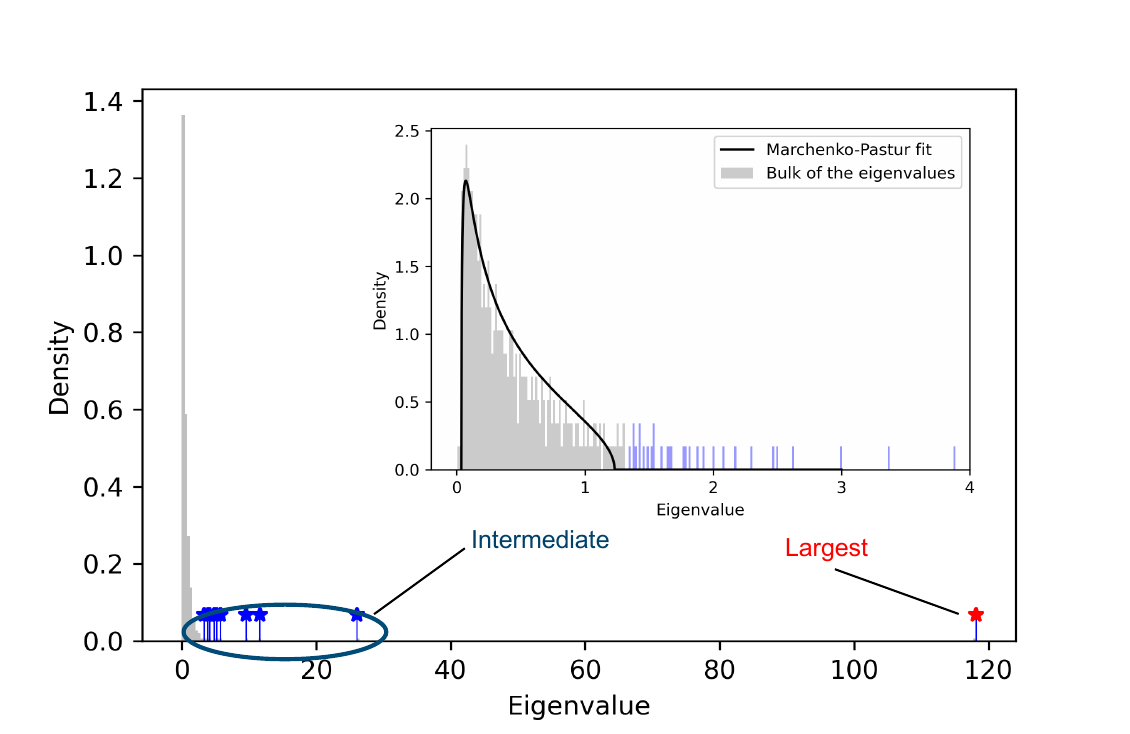}
    \caption{Eigenvalue density of a correlation matrix derived from S\&P~500 market data, with $n=484$ stocks and returns data collected from 2014 to 2019 for $\mathcal{T}=1260$, followed by data cleaning. The bulk of the eigenvalues form a dense cluster on the left side of the figure, consistent with the hypothesis of a random correlation matrix, followed by intermediate eigenvalues (shown in blue) and the largest eigenvalue (red star). In the inset, the bulk region is fitted with a Marchenko-Pastur distribution, represented by a smooth black line. The black line fit \cite{Laurent_RMT} has a lower edge at $\lambda_{-}=0.09$ and upper edge at $\lambda_{+}=1.23$. The upper edge of the Marchenko-Pastur density can serve as a threshold to identify the noisy part of the correlation matrix.} 
    \label{fig:RMT_split}
\end{figure}

To apply this insight to empirical correlation matrices, we decompose the matrix \(C\) into $C = C_{\mbox{noise}} + C_{\mbox{signal}}$, where
\begin{equation}
C_{\mbox{noise}} = \sum_{i: \lambda_i \leq \lambda_+} \lambda_i \bm{v}_i\bm{v}^T_i,
\end{equation}
\begin{equation}
C_{\mbox{signal}} = \sum_{i: \lambda_i \geq \lambda_+} \lambda_i \bm{v}_i\bm{v}^T_i,
\end{equation}
the eigendecomposition of $C$ whose eigenvalues are at most $\lambda_{+}$ captures the random components associated with eigenvalues less than or equal to \(\lambda_+\), and \(C_{\mbox{signal}}\) represents the structured components reflecting eigenvalues greater than \(\lambda_+\). The largest eigenvalue, $\lambda_1$, leads to top eigenvalue bias in the study of spiked covariance models~\cite{Bouchaud_Potters_2003, Donoho2018Optimal}. $\lambda_1$ is typically much greater than \(\lambda_+\), often corresponding to the global or \textit{market mode}~\cite{Bouchaud_Potters_2003, MacMahon_2015}, a common factor affecting all entities within the dataset. To discern more nuanced relationships, it is essential to further decompose the signal as,
\begin{equation}
C_{\mbox{signal}} = C^{*} + C_{\mbox{global}}, 
\label{eqn:3_way_split}
\end{equation}
where \(C_{\mbox{global}} = \lambda_1 \bm{v}_{1}\bm{v}^T_{1} \) represents the market mode, and the relevant structure present in the data is represented by $C^{*}$, which captures correlations within sub-groups of stocks or variables that are influenced similarly by shared factors. Specifically,
\begin{equation*}
C^* = \sum_{\substack{i: \lambda_i \geq \lambda_+ \\ i \neq 1}} \lambda_i \bm{v}_i\bm{v}^T_i.
\end{equation*}
We leverage $C^{*}$ in order to identify communities of assets to define the subproblems as we will present in the next subsections.  

\subsection{Clustering with Correlation-Based Modularity}
\label{sec:Clustering_with_Correlation_based_Modularity}

Our goal is to leverage the structure in the correlation matrix to identify subsets of assets defining the subproblems. To this end, we create the \textit{correlation graph}, $G$, whose nodes correspond to assets, and whose edges are weighted by the total correlation between their corresponding nodes (assets in $C$). We then perform clustering to identify communities. Standard community detection algorithms for networks use a modularity metric usually defined in terms of the difference between the actual edge weight \( C_{ij} \) and the expected edge weight under some null model. The expected edge weight is typically estimated based on the product of the degrees \( k_i \) and \( k_j \) of the nodes \( i \) and \( j \). Thus, the naive correlation based modularity \( Q \) under the community grouping $\bm{g}$ can be expressed as, 
\begin{equation}
Q(\bm{g}) = \frac{1}{\gamma} \sum_{ij} \left[ C_{ij} - \frac{k_i k_j}{\gamma} \right] \delta(g_i, g_j),
\label{eqn:original_modularity}
\end{equation}
where \( C_{ij} \) represents the actual edge weight between nodes \( i \) and \( j \), \( \gamma \) is the sum of all of the edge weights in the network, and $g_i$ is the component of $\bm{g}$, which denotes the group assignment for the $i$-th node. \( \delta(g_i, g_j) \) is the Kronecker delta function equal to 1 if nodes \( i \) and \( j \) are in the same community and 0 otherwise. However such standard community detection algorithms, including the ones developed by Newman~\cite{Newman_2006}, do not consider the specific properties of the modularity matrix when derived from correlation matrices \cite{MacMahon_2015}. The authors also highlight the need for a revised null model that acknowledges the structure of correlations in the network. A suitable null model for networks represented by correlation matrices must be formulated to account for both the individual properties of the nodes and the overarching network structure. Such a model ensures that the modularity function accurately reflects the network's community division. To this end, we outline  modifications to spectral modularity maximization methods for community detection algorithms in 
Algorithm~\ref{algorithm_modified_spectral}.  

We utilize tools from random matrix theory as described in Sec.~\ref{sec:pre_processing} to define a modified correlation matrix $C^{*}$ that captures the relevant information. We then use the correlation matrix $C^{*}$ to define the edge weight between nodes in the correlation graph. Recall that in Eq.~\eqref{eqn:original_modularity} the correlation-based modularity is computed with regards to multiple groups or communities. In this paper, however, we focus on the recursive greedy approach of partitioning into two communities similar to Ref.~\cite{Newman_2006}. For this case, let us rewrite $\bm{g}$ as $\bm{z}$ and let \( z_i = +1 \) if node \( i \) belongs to a particular community, and \( z_i = -1 \) otherwise. Then, $\delta(g_i, g_j) \equiv (z_i z_j + 1)/2$, and a modified correlation-based modularity \( Q_c(\bm{z}) \) can be defined as:
\begin{equation}
    Q_c(\bm{z}) = \frac{1}{ \gamma } \sum_{i,j} \frac{C_{ij}^{*} (z_i z_j + 1)}{2}
         =\frac{\mathbf{z}^T C^* \mathbf{z}}{2 \gamma } + \frac{C^*_{tot}}{2 \gamma },\label{eqn:C_tot}
\end{equation}
where $C^*_{tot}$ denotes the total edge weight of the cleaned correlation matrix $C^*$. By relaxing $\bm{z}$ variables to be real (while satisfying $\bm{z}^T\bm{z}=n$~\cite{Newman_2006}), the vector $\mathbf{z}$ maximizing $Q_c(\mathbf{z})$ is easily seen to be the vector matching the signs of the components of the eigenvector of $C^*$ corresponding to the largest eigenvalue. Additionally, the second term in \ref{eqn:C_tot} can be dropped from the optimization objective because it does not depend on $\mathbf{z}$. In Theorem~\ref{thm:solution_gap}, we show that choosing the eigenvector corresponding to the largest eigenvalue would indeed be optimal for minimizing the drop in solution quality in the setting where we use continuous relaxation of variables.

To further subdivide the communities obtained from the initial bisection, we need to determine the potential modularity change \(\Delta Q_c\). The graph \(G\) is then split into two new subgraphs (\(G_1\) and \(G_2\)). Let \(\bm{z}\) be a vector representing the vertices in \(G\). The modularity change for this bisection is denoted by $\Delta Q_c^{(G_1|G_2)}$, and is given by
\begin{eqnarray}
\Delta Q_c^{(G_1|G_2)} &=& \frac{1}{\gamma} \left[ \sum_{i,j \in G_1} C_{ij}^{*} + \sum_{i,j \in G_2} C_{ij}^{*} - \sum_{i,j \in G} C_{ij}^{*} \right] \nonumber \\
&=& \frac{1}{\gamma} \left[ \sum_{i,j \in G} C_{ij}^{*} \frac{1+z_i z_j}{2} - \sum_{i,j \in G} C_{ij}^{*} \right] \nonumber \\
&=& \frac{1}{2\gamma} \left[ \sum_{i,j \in G} C_{ij}^{*} z_i z_j - \sum_{i,j \in G} C_{ij}^{*} \right] \nonumber \\
&=& \frac{ \bm{z}^{T} C_G^{*}\bm{z} }{2\gamma} - \frac{\widetilde{C}_{GG}^{*}}{2\gamma},
\label{eqn:Qc_gain}
\end{eqnarray}
where \(C_G^{*}\) represents the sub-matrix of \(C^{*}\) restricted to the subset of nodes within graph \(G\), and  $\widetilde{C}_{GG}^{*}:=\sum_{i,j \in G} C_{ij}^{*}$. As with the initial bisection, \(\bm{z}\) is chosen to maximize \(\Delta Q_c^{(G_1|G_2)}\) by selecting its elements to match the sign of the eigenvector corresponding to the largest eigenvalue of the matrix \(\mathbf{C}_G^{*}\) \cite{spectral_mod, MacMahon_2015}. Then, the proposed modularity-based spectral method continues iterating this procedure until no further bisection can increase the modularity. We summarize the steps in Algorithm~\ref{algorithm_modified_spectral} and Algorithm~\ref{algorithm_valid_split} (see Appendix~\ref{app:alg_2}).

The proposed method differs from standard spectral clustering in that it specifically aims to maximize a modularity function that measures the strength of division of a network into modules or communities. This is achieved by eigen-decomposing the modularity matrix, a matrix that represents the difference between the actual adjacency matrix of the network and a null model. The method involves recursively or iteratively bisecting the network based on the leading eigenvector of the modularity matrix, contrasting with spectral clustering, which typically partitions a network based on the eigenvectors of the Laplacian matrix and does not necessarily aim to maximize modularity.

Another alternative for a modified modularity metric is to utilize a \textit{normalized} \(C^{*}\) instead of \(C\), which is defined as,  
\begin{equation}    
(\widetilde{C^{*}})_{ij} \leftarrow C^{*}_{ij} - \frac{k_i k_j}{\gamma},
\label{eqn:new_modularity}
\end{equation}    
where $k_{i}=\sum_{j}C^{*}_{ij}$. Note that $\bm{1}=(1,1,\ldots)^{T}$ is a trivial eigenvector of the normalized $\widetilde{C^{*}}$ with eigenvalue $0$, because the row-sums and the column-sums of $\widetilde{C^{*}}$ is $0$. This is reminiscent of a property of the matrix known as the graph Laplacian \cite{chung1997spectral}. Hence, the optimal eigenvector of $\widetilde{C^{*}}$ is orthogonal to the all $\bm{1}$ vector~\cite{Newman_2006}, thus satisfying $\sum_{i}z_{i}=0$. This ensures that we can find a bipartition corresponding to the positive and negative components of the optimal eigenvector of the modified modularity matrix. We can use this observation further to perform the clustering process in order to find all communities smaller than a given threshold. We introduce Algorithm~\ref{algorithm_modified_iterative} (which is a slight modification of Algorithm~\ref{algorithm_modified_spectral}, see Appendix~\ref{app:alg_2}) that iteratively decomposes the problem further until the number of variables in each problem is less than a certain threshold. Specifically, at each step, if a community size is over a threshold, we re-run the community detection algorithm as many times needed until all the subcommunities generated are within the expected size. Note that we can improve this algorithm by regrouping small communities together, which should reduce the solution error to the original problem.

\algrenewcommand\algorithmicrequire{\textbf{Input:}}
\algrenewcommand\algorithmicensure{\textbf{Output:}}

\begin{algorithm*}[tbh]
\begin{algorithmic}[1]
\Require $C^{*}$: Clean correlation matrix as in Eq.~\eqref{eqn:3_way_split}
\Ensure $\bm{g}$: vector of community assignments 
\State Initialize: $G \gets C^{*}$, $U \gets G$, $D \gets \emptyset$  \Comment{$G$: Input graph , $U$: Undecided subgraphs }
\While{$U \neq \emptyset$}
    \State $G' \gets \text{pop}(U) $: \Comment{Details in Sec.~\ref{sec:Clustering_with_Correlation_based_Modularity}}
    \State $(\text{isValid}, G_1, G_2) \gets \text{VALID\_SPLIT}(G')$ \Comment{Check if $G'$ can be split (details in Algorithm~\ref{algorithm_valid_split})}
    \If{$\text{isValid}$}
        \State $U \gets U \cup \{G_1, G_2\}$ \Comment{If $G'$ can be split, store them to be split again}
    \Else
        \State $D \gets D \cup \{G'\}$ \Comment{Otherwise, fix the community assignment}
        \State Update $\bm{g}$ to reflect the community labels for the vertices in $G'$
    \EndIf
\EndWhile
\State \Return $\bm{g}$ as computed from the sets in $D$
\end{algorithmic}
\caption{Modified Modularity-Based Spectral Optimization}\label{algorithm_modified_spectral}
\end{algorithm*}
 
When executing our algorithms, we need to ensure that each node is assigned to a community, or that each node belongs to a community whose size is at most the maximum allowed size for a community (denoted by $\phi$) after executing Algorithm~\ref{algorithm_modified_spectral} or Algorithm~\ref{algorithm_modified_iterative}, respectively. For this, we introduce the \textit{pop} method to choose a subgraph whose nodes have not been assigned to a community in Algorithm~\ref{algorithm_modified_spectral} or to choose a subgraph whose size exceeds the threshold in Algorithm~\ref{algorithm_modified_iterative}. We denote the set of subgraphs that have not yet been assigned to a community by $U$. There are several possible strategies to pick a subgraph from $U$: (i) pick the first element in the First-In-First-Out order, (ii) pick a subgraph uniformly at random, (iii) choose it with probability proportional to its size, or (iv) prioritize the largest subgraph. In practice, we observe that approaches (i) and (iii) outperform the other choices. 

\subsection{Partitioning of Constraints \label{sub:partition}}

In the previous subsection, we have shown how to divide the initial asset universe of the problem into subsets of assets to define subproblems. To do this, it is also required to partition the constraints of the initial problem. We show how to do this for two types of constraints in next subsections. 

\subsubsection{Cardinality Constraints}
\label{card_cons}

Proceeding with the earlier formulation of dividing the asset universe into \( K \) communities, we denote the communities by the index $k$. Each community has $n_k$ assets, expected returns $\bm{\mu}_k$, and covariance matrix $\Sigma_k$. Within each community \( k \), the subproblem is formulated as, 
\begin{equation}
\min_{\bm{x}_k\in \mathbb{Z}^{{n}_{k}}} H_k(\bm{x}): -\bm{\mu}_k^T \bm{x}_k + q' \bm{x}_k^T \Sigma_k \bm{x}_k, \label{eqn:opt_commk}
\end{equation}
where \( \bm{x}_k \) denotes the vector of asset weights in community $k$ with $n_k$ assets. Each community adheres to a local cardinality constraint $\Omega_{k}: \mathbf{1}^T \bm{x}_k = dn_k$. After solving these subproblems, the solutions \( \bm{x}^{*}_1, \bm{x}^{*}_2, \ldots, \bm{x}^{*}_K \) are recombined to respect the global cardinality constraint: $\Omega:\sum_{k=1}^K \mathbf{1}^T \bm{x}^{*}_k = dn$. The validity of this combined solution is determined by its adherence to both local community constraints and the global constraint. In order to satisfy the global constraint $\Omega$, we can apply rounding. For the first $K-1$ communities (i.e., $k=1,2,\dots, K-1$), the number of selected assets is the one output by each separate subproblem. Then, the number of assets for the $K$-th community is enforced to be $\lfloor dn \rfloor - \sum_{k=1}^{K-1} \lfloor dn_k \rfloor.$ 

Similarly, we restrict the risk of the subproblems relative to the total exposure of risk of the initial problem. To this end, we perform risk rebalancing, which adjusts the portfolio to maintain a desired level of risk exposure. We modify the objective function in Eq.~\eqref{eqn:opt_commk} by replacing \( q \) with \( q' \) as,  
\begin{equation}
q' = q\times\left(\frac{ \sum_k \|  \bm{\mu}_{k} \|_2 / \| \bm{\mu} \|_2}{\sum_k \|  \Sigma_k \|_F / \| \Sigma \|_F}\right).
\end{equation}
To motivate the need for rebalancing, observe that the risk factor $q$ is sensitive to the units in which returns and variances are measured. For example, if we scale all prices by a constant factor $\alpha$, then the risk factor $q$ must be scaled by $1/\alpha$ to retain the same solution. For a given risk tolerance, therefore, the corresponding $q$ is dependent on the norms of $\Sigma$ and $\bm{\mu}$. After community detection, we are effectively solving portfolio optimization with a block diagonal covariance matrix with blocks given by $\Sigma_k$. The proposed rebalancing attempts to preserve the risk tolerance under this modification. We cannot analytically guarantee this, because the subproblems are MIQPs and are subject to rounding. However, this rule is empirically observed to be very effective in minimizing the loss in solution quality compared to the original problem.

\subsubsection{Quadratic Constraints}
\label{quad_cons}

Consider the particular PO problem with a quadratic constraint introduced in Eq.~\eqref{eqn:quad_constraint_ps}. In a decomposed optimization framework, the overall problem is partitioned into a series of subproblems. Each subproblem \( k \) is associated with an objective involving a quadratic term,
$$
(\bm{x}_k - (\bm{x}_b)_{k})^T \Sigma_k (\bm{x}_k - (\bm{x}_b)_{k}),
$$
where \( \bm{x}_k \) represents the decision variable for the \( k \)-th subproblem, and \( \Sigma_k \) is a symmetric positive definite matrix associated with the \( k \)-th subproblem. The subproblems are constrained by $\bm{x}_k^T \Sigma_k \bm{x}_k \leq w_k a,$ where \( w_k \) represents the weight on the \( k \)-th subproblem,  \((\bm{x}_b)_{\text{k}}\) represents the elements of $\bm{x}_b$ vector corresponding to the community $k$, and \( a \) is a scalar parameter shared across all subproblems. One simple choice for $w_{k}$ can be to use the ratio of nodes present in the \( k \)-th subproblem over the total number of assets.

Satisfying these constraints locally does not guarantee that the global constraint is satisfied. To enforce the feasibility of the global constraint, we introduce a new matrix \( \Sigma'' = s \Sigma' \), where \( \Sigma'\) is a block diagonal matrix constructed from the matrices \( \Sigma_k \), and \( s \) is a scalar. This matrix \( \Sigma'' \) is designed to be diagonally dominant over the original covariance matrix \( \Sigma \), thereby ensuring that \( \Sigma'' - \Sigma \succeq 0 \). To ensure this, we need to derive conditions on the scalar \( s \). The criterion is derived based on the relationship
\begin{equation}
    s \geq \frac{\lambda_{\text{max}}(\Sigma)}{\lambda_{\text{min}}(\Sigma')} \ge 1,
\end{equation}
where \(\lambda_{\text{max}}(\cdot)\) and \(\lambda_{\text{min}}(\cdot)\) represent the maximum and the minimum eigenvalues, respectively. The minimum eigenvalue of $\Sigma'$ is the minimum among all the block diagonal constituents, because their union forms the eigenset of $\Sigma'$. This condition ensures that the scaling factor \(s\) sufficiently elevates the eigenvalue spectrum of \(\Sigma'\) to meet or exceed that of \(\Sigma\). With the modified constraint for each subproblem being
$$
    \bm{x}_k^T \Sigma_k \bm{x}_k \leq \frac{w_k a}{s},
$$
and assuming the normalization condition $\sum_k w_k = 1$ is satisfied for feasible solutions, the aggregate of these constraints across all subproblems implies
$$
\bm{x}^T \Sigma' \bm{x} = \sum_k \bm{x}_k^T \Sigma_k \bm{x}_k  \le \frac{a}{s} \sum_k w_k \le a. 
$$
Alternatively, the initial step in this strategy involves setting the weight of each subproblem's constraint, \(w_k a\), to be proportional to a predefined quadratic form: $w_k a = r \cdot (\bm{x}_b)_{\text{k}}^T \Sigma_{\text{k}} (\bm{x}_b)_{\text{k}},$ where $r < 1$ is a constant. The choice of $r$ is explained in Appendix~\ref{sec:deriving_r} so that
\begin{equation}
a = r \sum_k (\bm{x}_b)_{\text{k}}^T \Sigma_{\text{k}} (\bm{x}_b)_{\text{k}} < \bm{x}^T_b \Sigma \bm{x}_b
\end{equation}
eliminates the trivial solution $\bm{x}_b$ in Eq.~\eqref{eqn:quad_constraint_ps}. The suppression factor \( s \) for feasibility of the constraints can be further introduced modifying the initial weighting to $w_k a/s = (r/s) \cdot (\bm{x}_b)_{\text{k}}^T \Sigma_{\text{k}} (\bm{x}_b)_{\text{k}},$ where increasing \(s\) serves to systematically tighten the constraints. The choice of \(s\) is critical: it must be large enough to ensure the feasibility of all subproblems while avoiding overly restrictive constraints that might preclude feasible solutions. In practice, we do a simple search over a range of values starting from $1$. 

\section{Numerical Results}
\label{sec:numerical_results}

We now numerically assess the performance of our decomposition pipeline in its application to PO problems based on real-world data. 

\textbf{Problem instances and figures of merit.---} The dataset utilized in this work is based on the Russell 3000 index, with daily returns collected over a period of 1000 days starting from January 2010. The instances range in size from $90$ to $1500$ assets. Each random seed corresponds to adding incrementally a random pool of assets in step sizes. Branch and Bound (B\&B) solvers are characterized by the MIP gap (default $10^{-4}$,) which corresponds to the difference between the upper and lower bounds on the solution:
\[
\text{MIP gap} = \frac{|H_{B} - H_{I}|}{|H_{I}|}.
\]
Here, \( H_{I} \) is the best known integral bound on the objective value (i.e., the best integer feasible solution found so far, also referred to as incumbent), and \( H_{B} \) is the best known upper bound, being this the optimal value corresponding to the relaxed problem in the B\&B procedure. The solver terminates successfully as soon as it has proven that the gap of the best found solution is below a specified small value; we refer to this time as time-to-solution (TTS). 

This time is one figure of merit, together with the solution quality $H$. The problems considered for the benchmark are the two PO problems introduced in Sec.~\ref{prob_statement}. We benchmark the performance of the proposed decomposition, where we solve each subproblem utilizing a B\&B solver. We compare to the results obtained through \textit{direct optimization} which refers to the optimization of the full problem. The \textit{direct optimization} is done with both the default MIP gap ($10^{-4}$), and a higher MIP gap ($5 \times 10^{-2}$) set with respect to the loss in solution quality. For the decomposition pipeline, we benchmark the performance without restricting the community sizes, part of the decomposition component of the pipeline, and by restricting the size of the communities up to $30$. The motivation of this number is to make these subproblems compatible with near-term quantum devices that are characterized by a limited number of qubits (usually in the order of tens or hundreds). Given that one generally needs a number of ancilla qubits to faithfully embed the given problem onto the underlying quantum hardware, we limit the size of the subproblems (i.e., communities) to $30$.

\begin{figure}[!h]
    \begin{overpic}[percent, scale=0.55]{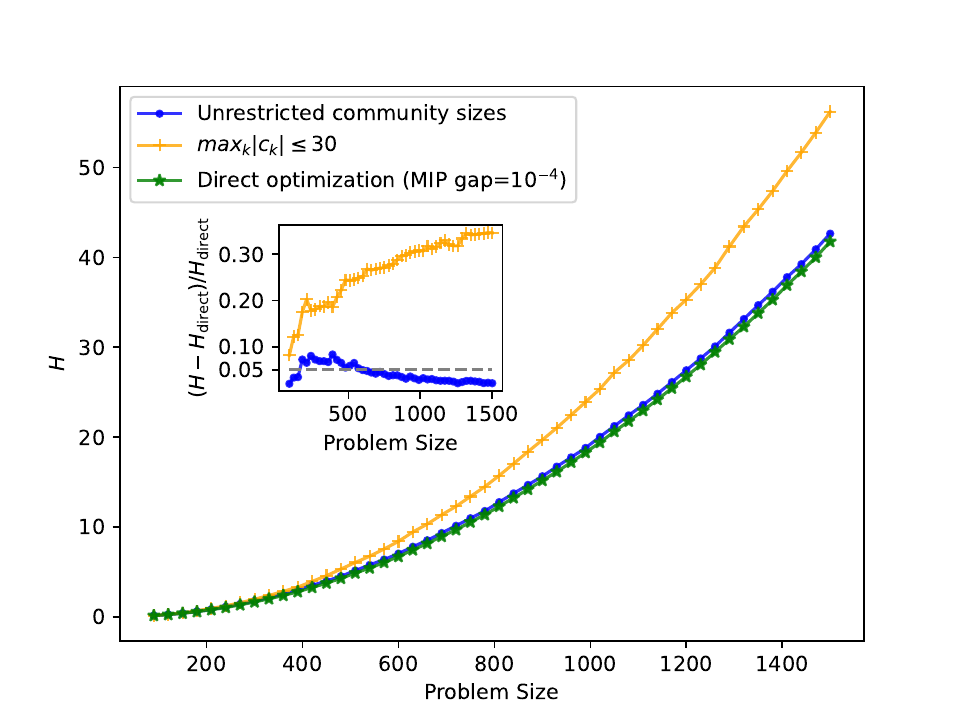}
    \put(5,68){(a)}
    \end{overpic}
    \medskip
    \begin{overpic}[percent, scale=0.55]{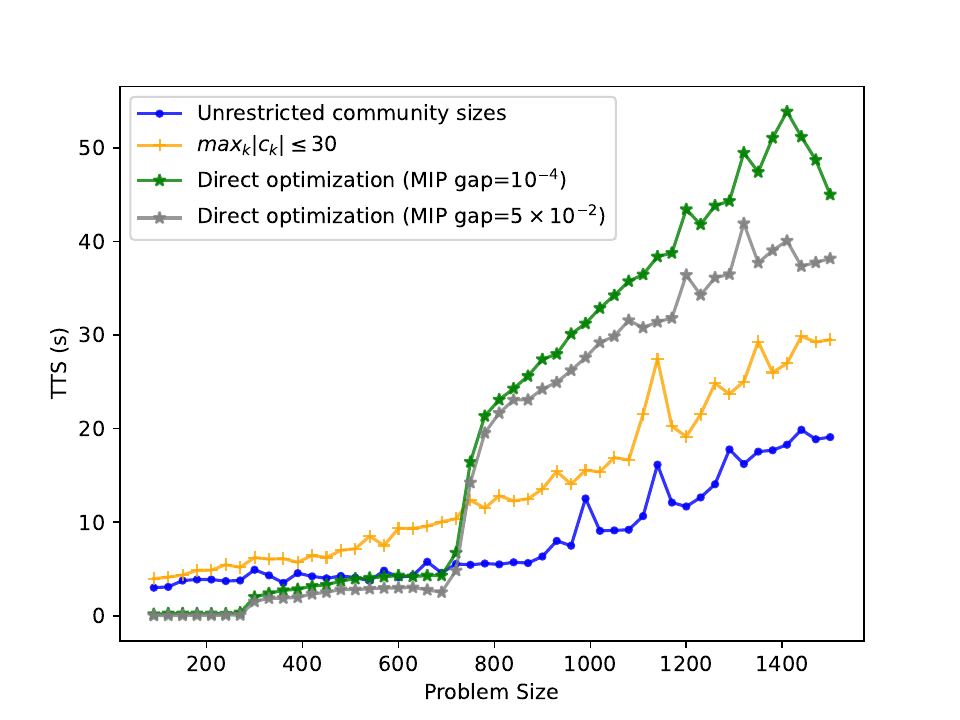}
    \put(5,68){(b)}
    \end{overpic}
    \caption{Performance as a function of the problem size of the PO with linear cardinality constraint set at $n/2$ (Eq.~\eqref{eqn:por_optim}). Panel (a) (top) shows the solution quality $H$ and panel (b) (bottom) the time to solution (TTS). The line (stars) corresponds to directly optimizing with the green line corresponding to the upper bound of the MIP gap set at $5 \times 10^{-4}$ and the grey line with the bound at $5 \times 10^{-2}$. The blue line (dots) and the orange (plus signs) refer to the proposed decomposition pipeline, where the first one corresponds to an unrestricted community size and former to a restricted size of the largest community size thresholded at $30$. The inset of panel (a) shows the relative drop in objective for our decomposition pipeline with and without restrictions over the community size. The dashed (gray) line marks the drop or gap percentage at $0.05$ indicating that indeed, the loss in solution quality as a consequence of the approximations imposed by the pipeline is bounded due to the structure present in the data.}\label{fig:lin_combined}
\end{figure}

\begin{figure}[!hb]
    \centering
    \includegraphics[width=\columnwidth]{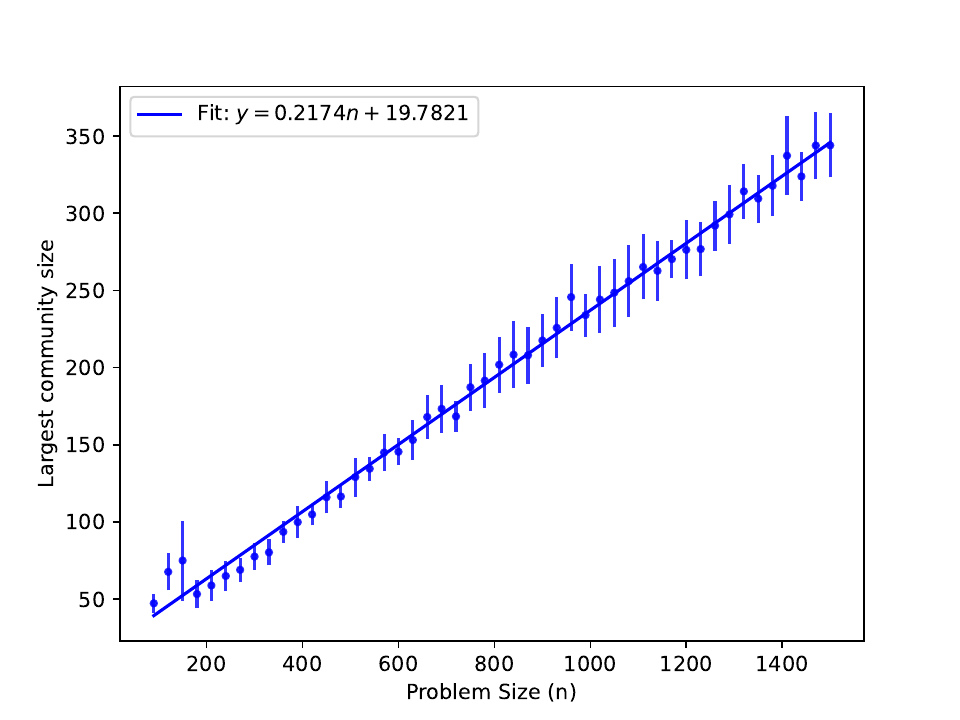}
    \caption{Size of the largest community against problem size ($n$). The error bars correspond to the standard deviation over $10$ seeds for each problem size. The median is fit with a linear curve with slope $\sim 0.21$.}
    \label{fig:size_largest}
\end{figure}

\begin{figure}[!h]
    \begin{overpic}[percent, scale=0.55]{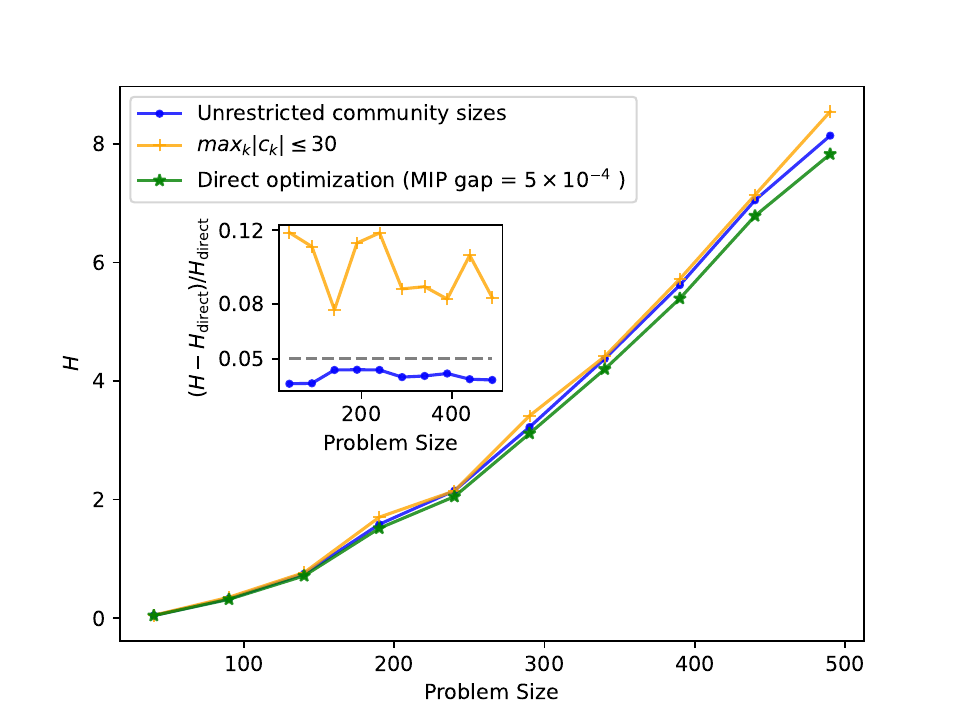}
    \put(5,68){(a)}
    \end{overpic}
    \medskip
            \centering
     \begin{overpic}[percent, scale=0.55]{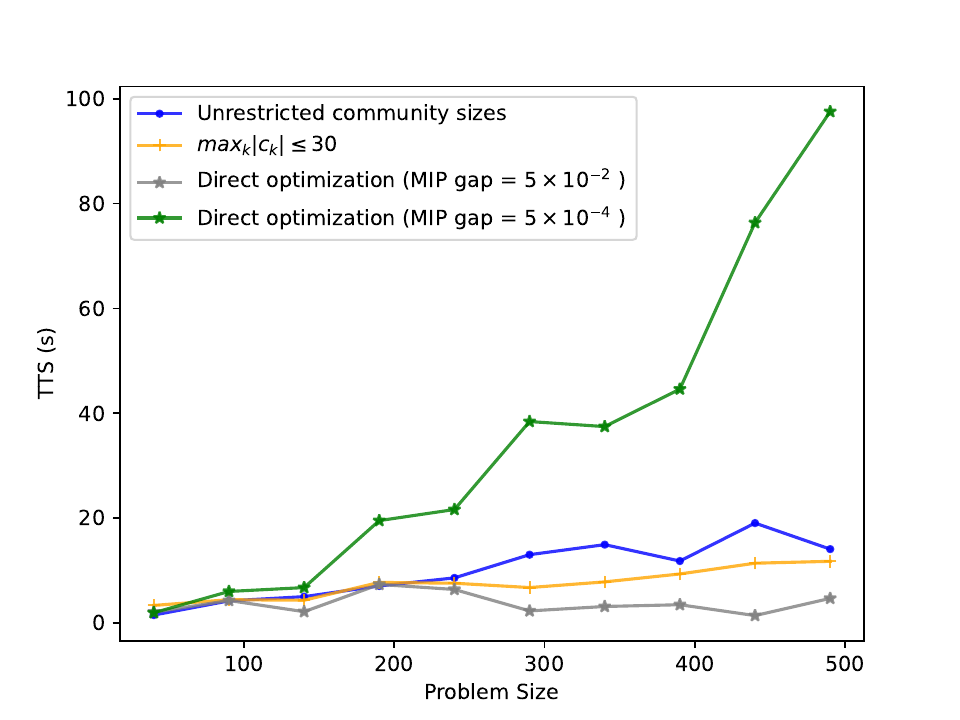}
    \put(5,68){(b)}
    \end{overpic}       
       \caption{Performance as a function of the problem size of the optimization of the risk minimization problem with quadratic constraint set at $a=0.9 \times \mathbf{x_{b}}^T\Sigma\mathbf{x_b}$ (Eq.~\eqref{eqn:quad_constraint_ps}). Panel (a) (top) shows the solution quality $H$ and panel (b) the time to solution (TTS), respectively. The lines marked with stars corresponds to directly optimizing the portfolio optimization, with the green-colored lines corresponding to the upper bound on MIP gap set at $5 \times 10^{-4}$ and the grey lines corresponding to a gap of $5 \times 10^{-2}$. The blue line (dots) and the orange (plus signs) refer to the proposed decomposition pipeline, where the former corresponds to the unrestricted community size and latter to a restricted size of the largest community size thresholded at $30$.}\label{fig:quad_combined}
\end{figure}

In our experiments, we obtained results using both Gurobi and  CPLEX. Our approach consistently obtained results with a better TTS using Gurobi, and as such we report benchmark results using Gurobi; see Appendix \ref{appendix:numerics} for a discussion. Note that the TTS reported corresponds to running the optimization of each subproblem sequentially, and each with the default MIP gap ($10^{-4}$). Additionally, the proposed decomposition pipeline allows parallelization of the optimization of each subproblem, improving the end-to-end runtime. The classical hardware on which our numerical experiments were run, together with more details, are specified in Appendix \ref{appendix:numerics}.

The results for the problem considered with linear constraints are shown in Fig.~\ref{fig:lin_combined}. We find that performance within $5\%$ of the optimum is possible while effectively reducing the system size by approximately $80\%$ (see Fig.~\ref{fig:size_largest}) and saving a factor of $3 \times$ in runtime for real-world problems with $\sim 1500$ assets. For a smaller problem size of $800$ assets, the end-to-end pipeline takes approximately $7$ seconds when we employ the community detection algorithm without any restrictions on the community sizes, and approximately $11$ seconds when we add an additional restriction on size of the communities to be less than $30$ -- about $2 \times $ faster than direct optimization without any decomposition, which needs more than $20$ seconds. In Figure~\ref{fig:lin_combined}, we observe the end-end pipeline with restriction on community size (orange plus signs) takes longer than unrestricted (blue dots) because the increment in time for finding smaller communities is more than the increment in the combined time taken for the optimization of each sub-problem. 

In order to better understand the optimization time of our decomposition pipeline, we plot the size of the largest community found in Fig.~\ref{fig:size_largest}. We observe an effective reduction of approximately $80\%$ in problem size, indicating that indeed the structure in the data enables us to decompose the problem, further explaining the advantage of using our pipeline over optimizing the full problem instances in time to solution. 

For the optimization problem with quadratic constraint shown in Eq.~\eqref{eqn:quad_constraint_ps} we perform a similar analysis in Fig.~\ref{fig:quad_combined}. Differently to the previously discussed experiments, we set the smaller MIP gap to be $5\times 10^{-4}$, because otherwise the TTS with default MIP gap $10^{-4}$ reaches a timeout of $1000$ for problem sizes as small as $n \approx 160$ (see Fig.~\ref{fig:TTS_quad_linear}). Although the decomposition pipeline significantly outperforms direct optimization with the MIP gap parameter set to $5\times 10^{-4}$, it obtains similar results in terms of the TTS with the direct optimization when the MIP gap tolerance is set to $5\times 10^{-2}$. We also plot the optimization score for the same in Fig.~\ref{fig:quad_combined}. We find that while effectively reducing the system size by approximately $80\%$, we are still within approximately $4\%$ of the optimum while taking approximately one tenth of the time taken by the solver with a MIP gap set at $5 \times 10^{-4}$ for real-world problems with approximately $500$ assets. However, for this problem, the Gurobi solver with an allowed larger MIP gap of $5 \times 10^{-2}$, does have a comparable TTS to the decomposition pipeline. It is important to note that  without the additional steps discussed in Sec.~\ref{sub:partition}, the objective values obtained through the decomposition pipeline can be lower than the objective values obtained via direct optimization, however the corresponding aggregated solutions become infeasible. We emphasize that in our results, we always use the rescaling described in Sec.~\ref{sub:partition} to enforce feasibility. 

\section{Discussion}

In summary, we have proposed and implemented a modular decomposition pipeline that leverages the community structure inherent to the underlying data to reduce large-scale PO problems into a family of much smaller (and independent) subproblems. Using real-world data from the Russell 3000 index, we have applied our decomposition logic to two different (NP-hard) PO problems, showing that an approximately $80\%$ reduction in problem size can be achieved while maintaining solution quality within a minimal error. Such a reduction in effective problem size opens the door for hybrid integration with near-term quantum devices, making large-scale PO problems with thousands of assets potentially compatible with quantum hardware with hundreds of qubits. It would be interesting to investigate a potential generalization of our framework towards a larger class of constrained PO problems and beyond in subsequent work. 

\section*{Code Availability} An open source demo version of our code is publicly available at 
\url{https://github.com/jpmorganchase/dcmppln}.

\section*{Acknowledgments}

We thank Brandon Augustino from the Global Technology Applied Research team at JPMorganChase for detailed reviews of the manuscript and Alex Buts and Raj Ganesan, also from Global Technology Applied Research team, for their contributions to the execution of the numerical simulations. We thank Eun Leonard, and Grant Chang from JPMorganChase for discussion on the quadratically constrained optimization problems. We thank Victor Bocking and Peter Sceusa for their support. 

\section*{Disclaimer}

This paper was prepared for informational purposes with contributions from the Global Technology Applied Research center of JPMorgan Chase \& Co. This paper is not a product of the Research Department of JPMorgan Chase \& Co. or its affiliates. Neither JPMorgan Chase \& Co. nor any of its affiliates makes any explicit or implied representation or warranty and none of them accept any liability in connection with this paper, including, without limitation, with respect to the completeness, accuracy, or reliability of the information contained herein and the potential legal, compliance, tax, or accounting effects thereof. This document is not intended as investment research or investment advice, or as a recommendation, offer, or solicitation for the purchase or sale of any security, financial instrument, financial product or service, or to be used in any way for evaluating the merits of participating in any transaction.

\bibliographystyle{apsrevtitle}
\bibliography{papers}

\clearpage

\begin{appendix}

\section{Transforming Integer Programming to a Standard MIQCQP}
\label{subsec:app_ip2miqcqp}

In Eq.~\eqref{eqn:miqcqp}, we showed a standard form of MIQCQP with binary decision variables. However, the decision variables in the optimization problem of Eq.~\eqref{eqn:quad_constraint_ps} are integers. Here, we illustrate how to use a binary expansion to convert the integer decision variables into binaries by adding additional constraints. 

Let us assume that each element of $\bm{x}$ is at most $m \in \mathbb{Z}^+$, and let $l$ be a positive integer such that $2^{l-1} \le m < 2^l$. For each $j$-th element of $\bm{x}$, $x_j$, we can assign $l$ binary decision variables $y_{jl}$ such that, 
\begin{equation}
x_j = \sum_{k=0}^{l-1} y_{jk} 2^k,\quad\mbox{for~} j=1\ldots n.
\label{eqn:above}
\end{equation}
We can then use Eq.~\eqref{eqn:above} to transform Eq.~\eqref{eqn:quad_constraint_ps} into Eq.~\eqref{eqn:miqcqp}. To do so, additional inequalities must be added in order to guarantee that each element of $\bm{x}$ is at most $m$, namely
$$
\sum_{k=0}^{l-1} y_{jk} 2^k \le m,\quad\mbox{for~} j=1\ldots n.
$$
The rest of the transformation follows straightforwardly, thus allowing us to solve an integer problem leveraging binary variables. 

\section{Analytical bound on the solutions with decomposed pipelines}
\label{app:analyticsl_1}
\renewcommand{\thesubsection}{\Alph{subsection}}

Here, we discuss the quality of solutions we obtain from the decomposition pipeline to solve the problems as in Eq.~\eqref{eqn:risk_aversed_po}. In particular, we derive a bound on the degradation in the objective function under the decomposition pipeline. From this bound, we can devise strategies to mitigate the degradation. 
For example, as in Algorithms~\ref{algorithm_modified_spectral} and \ref{algorithm_valid_split}, we show that the communities are constructed using the largest eigenvalue and eigenvector, and the number of communities is made to be as small as possible. These facts can be explained directly from the bound given in the following theorem.

\newtheorem{thm}{Theorem}

\begin{thm}[Gap between optimal direct and decomposed solutions]
\label{thm:solution_gap}
Let $H(\bm{x})$ be the objective function as in Eq.~\eqref{eqn:risk_aversed_po} for $\bm{x} \in \mathbb{R}^n$. Let $\Sigma$ be the covariance matrix and $\Sigma'$ be the block-diagonal matrix derived from $\Sigma$ using the decomposition pipeline. Let $\bm{x}^*$ and $\bm{x}'$ be the optimal assignments for the direct $\Sigma$ and decomposed $\Sigma'$, respectively. Let $K$ be the number of communities in the pipeline so that $\Sigma' \equiv \mbox{Diag}\left(\Sigma'_{11}, \ldots, \Sigma'_{K K}\right)$ and $\Gamma \equiv \min_k \| \Sigma_{kk} \|$. Then, it holds that
\begin{equation*}
    |H(\bm{x}^*) -{H}({\bm{x}}') | \le \frac{\|\Sigma^{-1}\|\cdot \| \bm{\mu} \|^2}{4q}~\left(\frac{K~\|\Sigma - \Sigma' \|}{\Gamma} \right)^2 .
\end{equation*}
\end{thm}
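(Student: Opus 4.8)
The plan is to exploit the fact that, in the continuous relaxation $\bm{x}\in\mathbb{R}^n$, the objective $H(\bm{x})=q\,\bm{x}^T\Sigma\bm{x}-\bm{\mu}^T\bm{x}$ is a strictly convex quadratic (since $q>0$ and $\Sigma\succ0$), so its unconstrained minimizer is available in closed form. Setting the gradient $2q\Sigma\bm{x}-\bm{\mu}$ to zero gives $\bm{x}^*=\tfrac{1}{2q}\Sigma^{-1}\bm{\mu}$, and the identical computation for the decomposed objective $H'(\bm{x})=q\,\bm{x}^T\Sigma'\bm{x}-\bm{\mu}^T\bm{x}$ (with $\Sigma'$ invertible, since its blocks are positive definite) yields $\bm{x}'=\tfrac{1}{2q}(\Sigma')^{-1}\bm{\mu}$. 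Because $H$ is quadratic with Hessian $2q\Sigma$ and $\bm{x}^*$ is its global minimizer, the objective gap admits the exact representation $H(\bm{x}')-H(\bm{x}^*)=q\,(\bm{x}'-\bm{x}^*)^T\Sigma(\bm{x}'-\bm{x}^*)$. Since $\Sigma\succ0$ this quantity is nonnegative, so the absolute value in the statement may be dropped and it suffices to upper bound this expression.

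Next I would substitute $\bm{x}'-\bm{x}^*=\tfrac{1}{2q}\big[(\Sigma')^{-1}-\Sigma^{-1}\big]\bm{\mu}$ and rewrite the difference of inverses via the resolvent identity $(\Sigma')^{-1}-\Sigma^{-1}=(\Sigma')^{-1}(\Sigma-\Sigma')\Sigma^{-1}$. The key cancellation is that the middle $\Sigma$ appearing in the quadratic form meets the two outer factors $\Sigma^{-1}$, collapsing $\Sigma^{-1}\Sigma\Sigma^{-1}$ into a single $\Sigma^{-1}$; this is precisely what leaves $\|\Sigma^{-1}\|$ at the first power in the target bound rather than squared. Applying submultiplicativity of the operator norm together with Cauchy--Schwarz on the $\bm{\mu}$-dependent factors then gives
\begin{equation*}
H(\bm{x}')-H(\bm{x}^*)\le \frac{\|\Sigma^{-1}\|\,\|\bm{\mu}\|^2}{4q}\,\|(\Sigma')^{-1}\|^2\,\|\Sigma-\Sigma'\|^2 .
\end{equation*}

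The final step is to control $\|(\Sigma')^{-1}\|$ in terms of $K$ and $\Gamma$. Here I would use the block-diagonal structure $(\Sigma')^{-1}=\mbox{Diag}\big(\Sigma_{11}^{-1},\ldots,\Sigma_{KK}^{-1}\big)$ and bound its norm by writing it as a sum of the $K$ block contributions and applying the triangle inequality, $\|(\Sigma')^{-1}\|\le\sum_{k}\|\Sigma_{kk}^{-1}\|\le K\max_k\|\Sigma_{kk}^{-1}\|$, which I would then relate to $\Gamma=\min_k\|\Sigma_{kk}\|$ to obtain $\|(\Sigma')^{-1}\|\le K/\Gamma$; substituting this and squaring reproduces the claimed bound. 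I expect this last step to be the main obstacle and the place demanding the most care: the sharp spectral identity $\|(\Sigma')^{-1}\|=1/\lambda_{\min}(\Sigma')$ carries no factor of $K$, so the $K$ must be introduced through the (lossy) summation over blocks, and one must reconcile that $\|\Sigma_{kk}^{-1}\|$ is governed by the \emph{smallest} eigenvalue of each block whereas $\Gamma$ is written in terms of $\|\Sigma_{kk}\|$. Making this inequality rigorous, or reading $\Gamma$ as a minimal-eigenvalue quantity, is where the argument is most delicate.
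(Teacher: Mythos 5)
Your proposal is correct and follows essentially the same route as the paper: closed-form minimizers $\bm{x}^*=\tfrac{1}{2q}\Sigma^{-1}\bm{\mu}$, $\bm{x}'=\tfrac{1}{2q}(\Sigma')^{-1}\bm{\mu}$, the resolvent identity collapsing $\Sigma^{-1}\Sigma\Sigma^{-1}$ to a single $\Sigma^{-1}$ (the paper reaches the identical expression $\tfrac{1}{4q}\left|\bm{\mu}^T\Sigma'^{-1}(\Sigma-\Sigma')\Sigma^{-1}(\Sigma'-\Sigma)\Sigma'^{-1}\bm{\mu}\right|$ by direct subtraction and regrouping rather than your expansion around the minimizer, but the two are algebraically equivalent), then submultiplicativity and the block bound $\|(\Sigma')^{-1}\|\le\sum_k\|\Sigma_{kk}^{-1}\|\le K/\Gamma$. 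The delicacy you flag in the last step is genuine but is shared by the paper, which asserts $\|\Sigma_{kk}^{-1}\|\le 1/\Gamma$ without comment even though $\Gamma$ is defined as $\min_k\|\Sigma_{kk}\|$ rather than as a minimum-eigenvalue quantity.
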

\begin{proof}
For completeness, we rewrite $H(\bm{x})$ as
\begin{equation}    
H(\bm{x})\equiv -\bm{\mu}^T \bm{x} + q \bm{x}^T \Sigma \bm{x},
\end{equation}
and ${H}'(\bm{x})$ as, 
\begin{equation}    
{H}'(\bm{x})\equiv -\bm{\mu}^T \bm{x} + q \bm{x}^T \Sigma' \bm{x}.
\end{equation}
In the limit of a large time horizon, the covariance matrices $\Sigma$ and $\Sigma'$ will both be positive definite, which can be seen from the form of the Marchenko-Pastur distribution, or from the fact that the Cholesky factors have sufficient dimensions to make the covariance matrices full rank. Then, by the convexity of the objective functions, one can see that $H(\bm{x})$ and  ${H}'(\bm{x})$ are optimized, respectively, at  
$$
\bm{x}^* = \frac{1}{2q} \Sigma^{-1} \bm{\mu},~\quad~\mbox{and}~\quad~{\bm{x}}' = \frac{1}{2q} (\Sigma')^{-1} \bm{\mu}.
$$
Thus, we can derive 
\begin{align*}
&|H(\bm{x}^*) - H(\bm{x}') |= \\
&=\frac{1}{4q}\left| \bm{\mu}^T\left[2\left(\Sigma'^{-1} - \Sigma^{-1} \right) + \left(\Sigma^{-1} - (\Sigma'^{-1})^T \Sigma \Sigma'^{-1}\right)\right]\bm{\mu} \right| \\
&= \frac{1}{4q}\left| \bm{\mu}^T\left(2\Sigma'^{-1} - \Sigma^{-1} - \Sigma'^{-1} \Sigma \Sigma'^{-1}\right)\bm{\mu} \right| \\
&= \frac{1}{4q}\left| \bm{\mu}^T\left(\Sigma'^{-1} - \Sigma^{-1} - \Sigma'^{-1} \Sigma \Sigma'^{-1} + \Sigma'^{-1}\right)\bm{\mu} \right| \\
&= \frac{1}{4q}\left| \bm{\mu}^T\left[\Sigma'^{-1}(\Sigma - \Sigma')\Sigma^{-1} - \Sigma'^{-1}(\Sigma-\Sigma') \Sigma'^{-1}\right]\bm{\mu} \right| \\
&= \frac{1}{4q}\left| \bm{\mu}^T \left[ \Sigma'^{-1}(\Sigma - \Sigma')(\Sigma^{-1} - \Sigma'^{-1})\right]\bm{\mu} \right|\\
&= \frac{1}{4q}\left| \bm{\mu}^T \left[ \Sigma'^{-1}(\Sigma - \Sigma')\Sigma^{-1}(\Sigma'-\Sigma) \Sigma'^{-1}\right]\bm{\mu} \right|\\
&\leq \frac{1}{4q} \|\bm{\mu}\|^2 ~ \|\Sigma'^{-1}\|^2 ~ \|\Sigma - \Sigma'\|^2 ~ \|\Sigma^{-1}\|,
\end{align*}
where we have used the fact that $\Sigma$ and $\Sigma'$ are positive definite to set $(\Sigma^{-1})^T=\Sigma^{-1}$ and similarly for $\Sigma'$.

Finally, the bound in the theorem is obtained by noticing that $\| (\Sigma')^{-1} \| \le \sum_{k} \|\left((\Sigma')_{kk}\right)^{-1} \| \le K/\Gamma$. 

\end{proof}

The bound in the theorem enables us to devise a decomposition strategy to mitigate the degradation of the solutions by choosing as few communities $K$ as possible, partition the communities so that each $\|\Sigma'_{kk}\|$ is large, and the matrix $\Sigma'$ is as close to $\Sigma$ as possible.

\section{Spectral Clustering Algorithm with Threshold of Community Size}
\label{app:alg_2}

We present the splitting function (Algorithm~\ref{algorithm_valid_split}) which is used in Algorithm~\ref{algorithm_modified_spectral}. We also discuss Algorithm~\ref{algorithm_modified_iterative} to find communities smaller than a given threshold on the size of the communities.

\begin{algorithm*}[tbh]
\caption{Valid Splitting Function}
\begin{algorithmic}[1]
\Function{valid\_split}{$G$}
    \State Compute the modularity matrix $Q_c$ for $G$ \Comment{See Eq.~\eqref{eqn:C_tot}}
    \State Compute the highest eigenvalue and eigenvector of $Q_c$
    \State \quad $I_{\text{pos}} = \{i \mid \text{eigvec}[i] \geq 0\}$, \;  $I_{\text{neg}} = \{i \mid \text{eigvec}[i] < 0\}$ 
    \If{$(I_{\text{pos}} \neq \emptyset \; \text{and} \; I_{\text{neg}} \neq \emptyset)$} 
        \State $G_1, G_2 = $ subgraph of $G$ with vertices in $I_{\text{pos}}$, $I_{\text{neg}}$ 
        \If{ $\Delta Q_c^{(G_1|G_2)}>0$ (See Eq.~\eqref{eqn:Qc_gain} ) } 
        \State \Return $(\text{true}, G_1, G_2)$ \Comment{$G$ can be split into $G_1$ and $G_2$}
        \Else
        \State \Return $(\text{false}, G, \emptyset)$ \Comment{$G$ cannot be split}

        \EndIf
    \Else
        \State \Return $(\text{false}, G, \emptyset)$
    \EndIf
\EndFunction
\end{algorithmic}
\label{algorithm_valid_split}
\end{algorithm*}

\begin{algorithm*}[tbh]
\caption{Modified Modularity-Based Spectral Optimization with Threshold}
\begin{algorithmic}[1]
\Require $C_{\mbox{struct}}$: Clean correlation matrix  (Eq.~\eqref{eqn:3_way_split}), $\phi$: threshold 
\Ensure $\bm{g}$: vector of community assignments 
\State Initialize: $\bm{g} \gets Algorithm~\ref{algorithm_modified_spectral} ~(C_{\mbox{struct}})$, and $U \gets $ subgraphs whose sizes exceed $\phi$ 

\While{$U \neq \emptyset$}
    \State $G' \gets \text{pop}(U) $  \Comment{Details in Sec.~\ref{sec:Clustering_with_Correlation_based_Modularity}}
    \If{$|G'| \leq \phi$} \Comment{Check if the community size is less than the threshold}
        \State Update $\bm{g}$ to reflect the community labels for the vertices in $G'$
    \Else    
        \State $(G_1, G_2) \gets \text{SPECTRAL\_SPLIT}(G')$ \Comment{Split $G'$}
        \State $U \gets U \cup \{G_1, G_2\}$

    \EndIf

\EndWhile
\State \Return $\bm{g}$: the final vector of community assignment to each node 
\vspace{5pt}

\Function{spectral\_split}{$G$}
    \State Compute the modularity matrix $Q^{*}_c$ for $G$. \Comment{See Eq.~\eqref{eqn:new_modularity}}
    \State Compute the highest eigenvalue and eigenvector of $Q^{*}_c$
        \State $G_1 \gets $ subgraph of $G$ with vertices in $I_{\text{pos}}=\{i \mid \text{eigvec}[i] \geq 0\}$
        \State $G_2 \gets $ subgraph of $G$ with vertices in $I_{\text{neg}}=\{i \mid \text{eigvec}[i] < 0\}$
        \State \Return $( G_1, G_2)$ \Comment{$G_1$ and $G_2$ are both non empty sets}

\EndFunction

\end{algorithmic}
\label{algorithm_modified_iterative}
\end{algorithm*}

\section{Rescaling factor in optimization problems with quadratic constraints}\label{sec:deriving_r}
\addtocontents{toc}{\protect\setcounter{tocdepth}{2}}

We argued that the rescaling factor $r$ is smaller than $1$ in Sec.~\ref{quad_cons} so that $a \le \bm{x}^T_b \Sigma \bm{x}_b$. Here, we give its explanation based on the continuous setting for the optimization problem Eq.~\eqref{eqn:quad_constraint_ps} described by a convex objective function with a quadratic constraint of the form
\begin{equation}
\begin{aligned}
& \underset{\bm{x} \in \mathcal{R}^n}{\min}
& & (\bm{x}-\bm{x}_b)^T \Sigma (\bm{x}-\bm{x}_b) \\
& \text{subject to}
& & \bm{x}^T \Sigma \bm{x} \leq a,
\end{aligned}
\label{eqn:quad_constraint_cont}
\end{equation}
The Lagrangian \( \mathcal{L}(x, \lambda) \) for this constrained optimization problem is defined as 
\begin{equation}
\mathcal{L}(x, \lambda) = (\bm{x}-\bm{x_{b}})^T \Sigma (\bm{x}-\bm{x}_b) + \lambda (\bm{x}^T \Sigma \bm{x} - a),
\end{equation}
where \( \lambda \) is the Lagrange multiplier associated with the inequality constraint. 
The Karush-Kuhn-Tucker (KKT) conditions~\cite{karush1939minima,kuhn2013nonlinear}, which provide necessary conditions for optimality, are given by:
\begin{widetext}
\begin{align*}
    \text{Primal Feasibility:} & \quad \bm{x}^T \Sigma \bm{x} \leq a,\\
    \text{Dual Feasibility:} & \quad \lambda \geq 0, \\
    \text{Stationarity:} & \quad \nabla_x \mathcal{L}(\bm{x}, \lambda) = 2\Sigma(\bm{x} - \bm{x}_b) + 2\lambda \Sigma \bm{x} = 0, \\
    \text{Complementary Slackness:} & \quad \lambda (\bm{x}^T \Sigma \bm{x} - a) = 0.
\end{align*}
\end{widetext}
To find the optimal solution, we first set the gradient of the Lagrangian with respect to \( \bm{x} \) to zero (stationarity condition) and solve for \( \bm{x} \) in terms of \( \lambda \) and \( \bm{x}_b \), i.e., 
\begin{equation}
(1 + \lambda) \Sigma \bm{x} = \Sigma \bm{x}_b \implies \bm{x} = \frac{\bm{x}_b}{1 + \lambda}.
\end{equation}
Substituting this expression into the primal feasibility condition \( \bm{x}^T \Sigma \bm{x} \leq a \), we solve for \( \lambda \):
\begin{equation}
\lambda \geq \sqrt{\frac{\bm{x}_b^T \Sigma \bm{x}_b}{a}} - 1 \ge 0,
\end{equation}
which implies that
$$
a \le \bm{x}_b^T \Sigma \bm{x}_b.
$$ 
For a minimization problem with ``$\le$'' constraint, we consider the positive square root in the computation of \( \lambda \). If the computed \( \lambda \) is negative, we set \( \lambda = 0 \), indicating that the constraint is not active at the optimal solution. The optimal solution to the optimization problem is obtained by substituting the value of \( \lambda \) back into the equation for \( \bm{x} \). The solution must satisfy all the KKT conditions to be considered valid.

\section{Numerical Experiments \label{appendix:numerics}}

\begin{figure}[tbh]
    \includegraphics[width=\columnwidth]{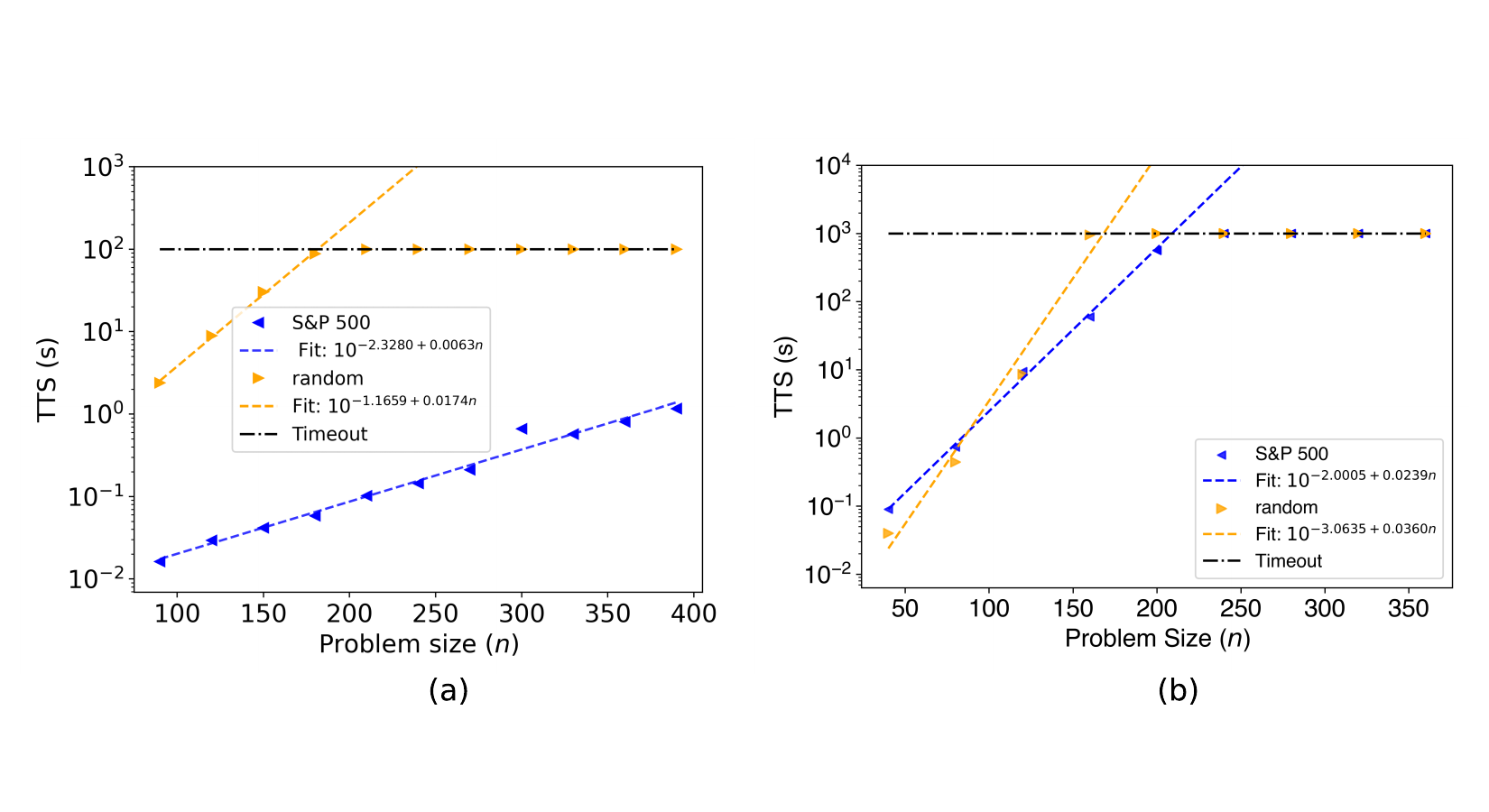}
    \includegraphics[width=\columnwidth]{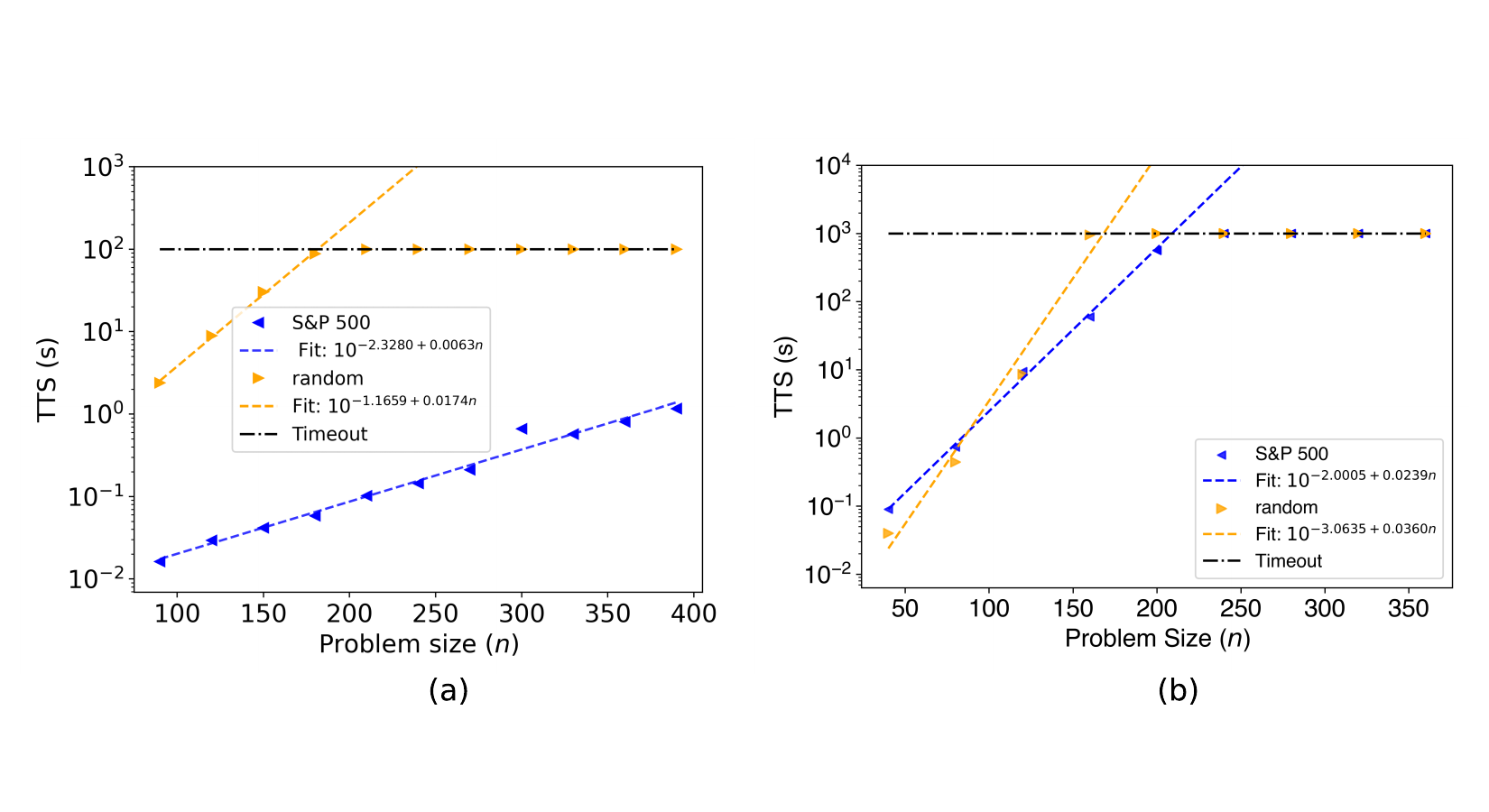}
    \caption{Median time-to-solution (TTS) over 10 seeds as a function of the problem size ($n$) for solving the portfolio-optimization problem with one cardinality constraint [Eq.~\ref{eqn:por_optim}, top panel] and a quadratic objective with a single quadratic constraint, as functions of problem size $n$ (bottom panel). Top panel: TTS scaling for random covariance (orange right triangles), $\propto 10^{0.00174n}$, is worse than for S\&P 500 (blue left triangles), $\propto 10^{0.0063n}$. The problems with random covariance reach the timeout of $100$ seconds at problem size $ n\sim 200$, but for the S\&P 500 data set we do not hit the timeout. Bottom panel: TTS scaling for random covariance (orange right triangles), $\propto 10^{0.036n}$, is worse than for the S\&P 500 data set (blue right triangles), $\propto 10^{0.0239n}$. The random covariance problem exceeds the 1000 second timeout at $n > 160$, while the S\&P 500 data set approaches the timeout at $n > 240$. The solver used in the experiments is Gurobi. 
    }
    \label{fig:TTS_quad_linear}
\end{figure}

We perform the optimization using Gurobi 11.0 from Gurobi Optimization (2023)~\cite{gurobi}. The hardware used is an Apple M3 MAX CPU with $16$ physical cores, $16$ logical processors, and usage of up to $16$ threads.

We also conducted additional numerical simulations beyond those discussed in the main text. In the results section, we presented findings on portfolio optimization with cardinality constraints set at $n/2$. Here, we provide additional results using a different set of cardinality constraints, specifically $n/3$, see Fig.~\ref{fig:TTS_po_1_n3}. Although the score gap is higher, the TTS is slightly elevated for all methods compared to our previous findings with constraints of $n/2$. Additionally, we plot the time taken by each component of the pipeline in Fig.~\ref{fig:decomp_sub}. Because both preprocessing and clustering rely on matrix diagonalization operations, their complexities are $\mathcal{O}(n^3)$. The total time for the optimization of subproblem (green right triangles in the figure) has a qualitatively similar trend to the other two parts of the decomposition pipeline. Using Fig.~\ref{fig:decomp_sub}, we find that all modules consume similar process times indicating that there is no bottleneck in the larger pipeline. 

\begin{figure}[tbh]
    \includegraphics[width=\columnwidth]{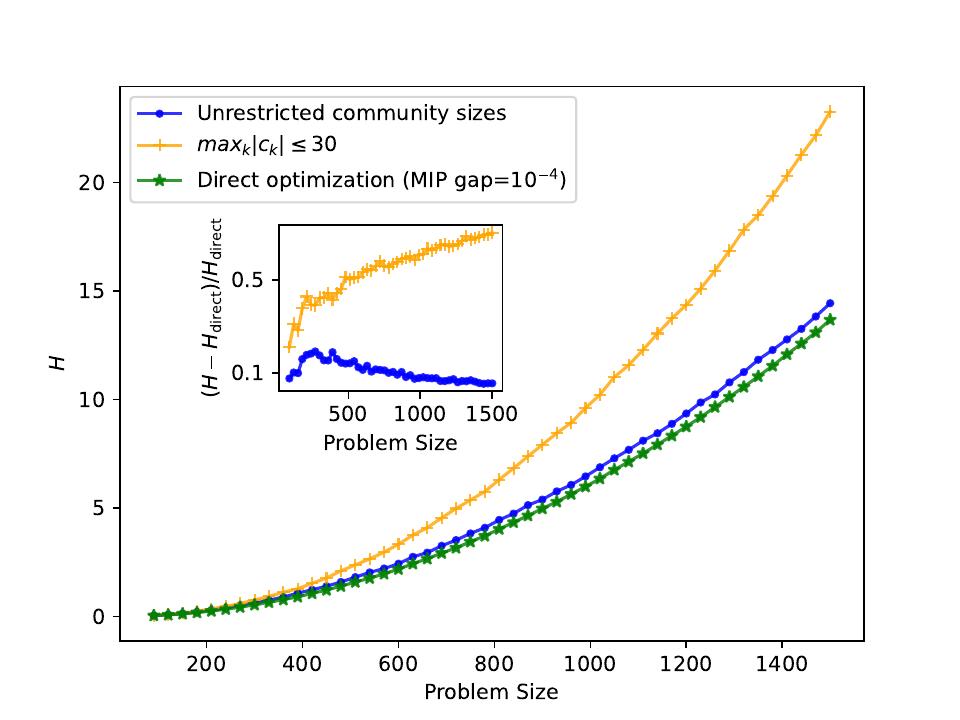}           
    \includegraphics[width=\columnwidth]{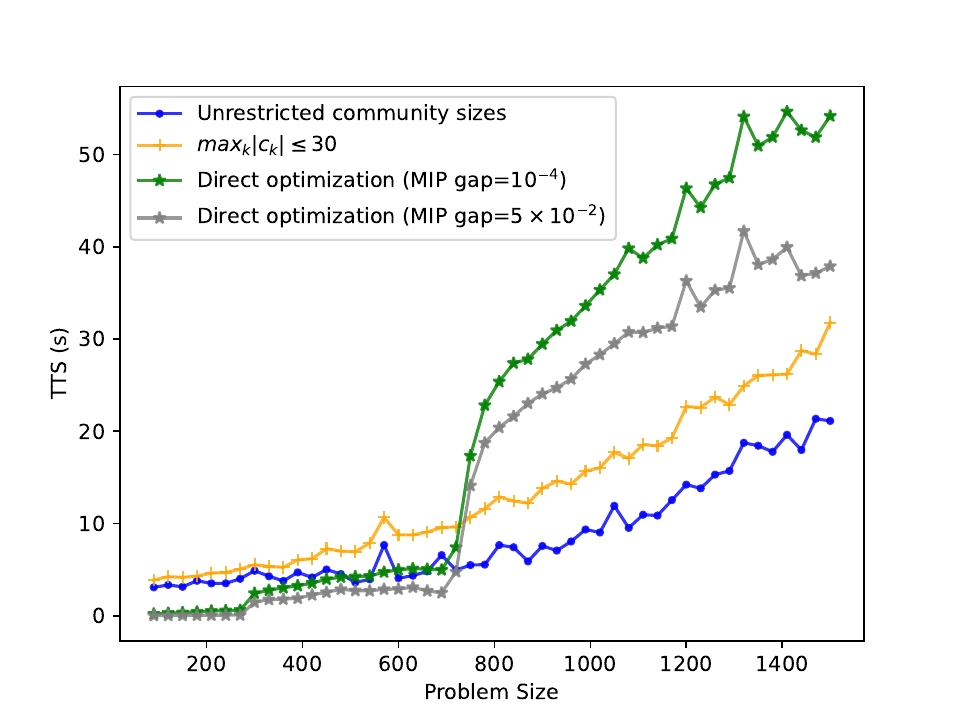}
    \caption{Performance as a function of the problem size of the PO problem with linear cardinality constraint set at $n/3$ (Eq.\eqref{eqn:por_optim}). Top panel: Solution quality $H$. Bottom panel: Time to solution (TTS). The line with green (grey) stars corresponds to directly optimizing with the MIP gap set at $5 \times 10^{-4}$ ($5 \times 10^{-2}$). The blue line with dots and the orange with plus signs refer to the proposed decomposition pipeline, where the former corresponds to an unrestricted community size and latter to restricting the size of the largest community size thresholded at $30$. The inset plot in the top panel shows the relative drop in objective for our decomposition pipeline with and without restrictions over the community size.}
\label{fig:TTS_po_1_n3}
\end{figure}

\begin{figure}[htb]
    \centering
    \includegraphics[width=9cm]{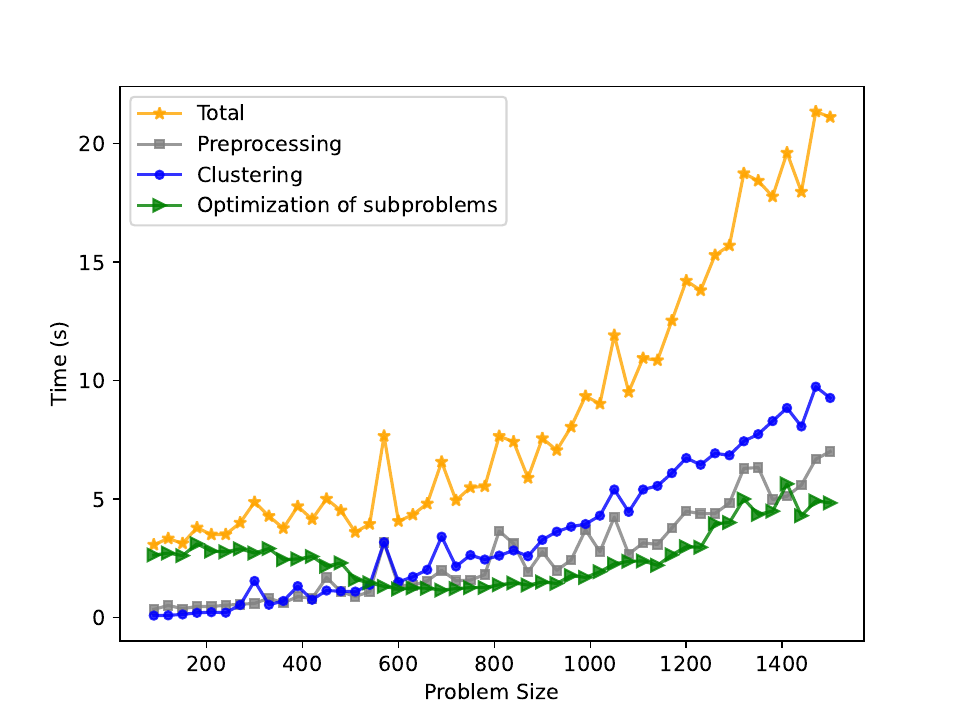}
    \caption{Time taken (process time) for each module (preprocessing, clustering, and optimization) of the decomposition pipeline. We plot median over $10$ seeds of solving for  portfolio optimization with assets taken from the Russell 3000 index and cardinality constraints set at $n/3$ as a function of the problem size $n$. Each random seed corresponds to adding incrementally a specific pool of assets from the lower limit of problem size $90$ to $1500$ in step sizes of $30$. The orange stars denote the total run time of the pipeline, grey squares correspond to the preprocessing, blue dots to the clustering step, and green right triangles to the optimization step.}
    \label{fig:decomp_sub}
\end{figure}

We also obtained benchmark results with CPLEX v20.1. However, for the integer portfolio optimization problem, the standard setting needs to be modified in order to obtain a reasonable time to solution. Specifically, the \texttt{qtolin} parameter \cite{cplex2009v12}---which controls the linearization of the quadratic terms in the objective function of a (mixed integer) quadratic program---needs to be turned off, because the default setting leads to a significant increase in the TTS. However, even with this setting disabled, our approach consistently obtained much better TTS results using Gurobi, which is why we used these numbers in this study. Additionally, for the optimization problems with a single quadratic constraint [Eq.~\eqref{eqn:quad_constraint_ps}], the CPLEX solver does not support the loading of the problem formulation itself unless it is first binarized. Due to the numerical instability of this approach, we again chose to only benchmark our decomposition pipeline against Gurobi.

\section{Derivation of Marchenko-Pastur Distribution}
\label{sec:app_RMT}

We begin with a short proof of Wigner's surmise using Stieltjes transformation \cite{Livan_2018}.
Let $W$ be an $n \times n$ symmetric random matrix whose entries on and above the diagonal are i.i.d.~with mean 0 and variance 1. Consider the scaled matrix $\frac{W}{\sqrt{n}}$, and let $\lambda_1, \ldots, \lambda_n$ be its eigenvalues. The Stieltjes transform of the empirical spectral distribution of $\frac{W}{\sqrt{n}}$ is given by 
\[ S_n(z) = \frac{1}{n} \sum_{i=1}^{n} \frac{1}{\lambda_i - z}. \]
Using the resolvent matrix, the Stieltjes transform can also be expressed as 
\[ S_n(z) = \frac{1}{n} \Tr \left[ \left(\frac{W}{\sqrt{n}} - zI\right)^{-1} \right]. \]
Let us denote by $M$ the matrix $W/\sqrt{n} - zI$, which can be partitioned as 
\[ M = \begin{bmatrix} \frac{W_{11}}{\sqrt{n}} - z & \frac{g^T}{\sqrt{n}} \\ \frac{g}{\sqrt{n}} & \tilde{M} \end{bmatrix}, \]
where $\tilde{M}$ is the $(n-1) \times (n-1)$ bottom right block of $M$ and $g$ is a column vector consisting of the off-diagonal elements of the first row of $W$. Applying the Schur complement formula for the inversion of block matrices, the $(1,1)$-entry of $M^{-1}$, denoted by $M_{11}^{-1}$, is given by 
\[ M_{11}^{-1} = \left(\frac{W_{11}}{\sqrt{n}} - z - \frac{g^T}{\sqrt{n}} \tilde{M}^{-1} \frac{g}{\sqrt{n}}\right)^{-1}. \]
As $n$ approaches infinity, the term
$$
\frac{g^T}{\sqrt{n}} \tilde{M}^{-1} \frac{g}{\sqrt{n}}
$$ 
concentrates around its expectation due to the law of large numbers. Hence, we can approximate $M_{11}^{-1}$ by 
$$\left[\frac{W_{11}}{\sqrt{n}} - z - \frac{1}{n} \Tr(\tilde{M}^{-1})\right]^{-1},
$$ 
which simplifies to $-z - S_n(z)$, assuming that the expectation of $\Tr(\tilde{M}^{-1})$ is close to $nS_n(z)$. Thus, the Stieltjes transform satisfies the equation 
\[ S_n(z) \approx \frac{1}{-z - S_n(z)}. \]
Solving this equation yields the Stieltjes transform of the semicircle law:
\[ S(z) = \frac{-z \pm \sqrt{z^2 - 4}}{2} \]

To derive the Marchenko Pastur (MP) distribution \cite{1967SbMat...1..457M}, let \( X_{d\times 1} \) be a random vector uniformly distributed over the eigenvalues \( \{\lambda_1, \ldots, \lambda_d\} \). Define the sample covariance matrix 
$$\Sigma_n = \frac{1}{n} \sum_{i=1}^n X_i X_i^T,
$$ 
which is a \( d \times d \) symmetric matrix. The Stieltjes transform of the empirical spectral distribution of \( \Sigma_n \) is:
\begin{equation}
S_n(z) = \frac{1}{d} \Tr \left( \Sigma_n - zI \right)^{-1}  \;  z \in \mathbb{C}  \setminus \mathbb{R}.
\end{equation}  
To calculate \( S_n(z) \), we consider the quantity \(  \sum_{i=1}^n X_i X_i^T - nzI \), which we denote by \( A \). Thus, we have $ S_n(z) = (n/d)\Tr \left( A^{-1} \right) $. Now, If we assume there exists a limiting spectral distribution as \( n \) grows large, we can approximate \( S_n(z) \) through \( S_{n-1}(z) \). Let \( B =  \sum_{i=1}^{n-1} X_i X_i^T - nzI \) and $\beta=(d/n)$. Then:
\begin{equation}
S_{n-1}(z) = \frac{n-1}{d} \Tr \left( B^{-1} \right) \approx \frac{1}{\beta} \Tr(B^{-1})
\end{equation} 
We separate out the addition of the \( n \)-th term as follows: 
\[ A^{-1} = \left[ B + X_n X_n^T \right]^{-1}. \]
Now, considering the independence of \( X_n \) from \( B \), we use the Sherman-Morrison formula to obtain
\begin{equation}
\label{eq:Sherman_morrison_MP}
X_n^{T} A^{-1} X_n = X^{T}_n (B + X_n^T X_n)^{-1} X_n = 1-\frac{1}{1 + X_n^T B^{-1} X_n},
\end{equation}  
which allows us to connect \( S_n(z) \) and \( S_{n-1}(z) \) through the relationship between \( A^{-1} \) and \( B^{-1} \). We now have the basic relationship for a recursive calculation of the Stieltjes transform as \( n \) becomes large. From the independence of $X_n$ and $B$, and conditioning on $B$, we can obtain the following for the denominator in Eq.~\eqref{eq:Sherman_morrison_MP}
\begin{equation}
 X_n^T B^{-1} X_n \approx \mathbb{E}[X_n^T B^{-1} X_n] = \Tr(B^{-1}) \approx \Tr(A^{-1}),
\end{equation}
where the approximation follows from concentration considerations, and where we have also used the relationship $\Tr(A^{-1}) \approx \Tr(B^{-1})$. We therefore have the following approximation:
\begin{equation}
X_n^T A^{-1} X_n \approx 1 - \frac{1}{1 + \Tr(A^{-1})}    
\end{equation}
Similarly, for a different slicing of the matrix, $X_k$, we assume that the same approximation holds. Summing over $n$ slices, we have:
\begin{equation}
\sum_{k=1}^{n} X_k^T A^{-1} X_k \approx n \left[1 - \frac{1}{1 + \Tr(A^{-1})}\right]    
\end{equation}
Using the linearity and cyclic property of the trace, we can rewrite the sum as 
$$
\Tr\left[\sum_{k=1}^{n} X_k^T A^{-1} X_k\right] = \Tr\left[\sum_{k=1}^{n} X_k X_k^T A^{-1}\right].
$$
Because $\sum_{k=1}^{n} X_k X_k^T = (A + nzI)$, we obtain
\begin{equation}    
\Tr\left[(A + znI)A^{-1}\right] = d + zn \, \Tr(A^{-1}).
\end{equation}
We have shown that
\begin{equation}
d + zn\Tr(A^{-1}) \approx n \left[ 1 - \frac{1}{1 + \Tr(A^{-1})} \right].
\end{equation}
This is a quadratic equation in $\Tr(A^{-1})$. To solve it, we use $s_n(\Sigma) = (1/\beta) \Tr(A^{-1})$ and find:
\begin{equation}
s_n(z) = \frac{1 - z - \beta + \sqrt{(1 -z-\beta)^2-4z\beta}}{2}    
\end{equation}
This is the same as for the Stieltjes transformation of the Marchenko Pastur distribution \cite{Livan_2018}, which demonstrates uniqueness and hence proves the result that, indeed, the eigenvalue distribution of random correlation matrices follow the Marchenko Pastur distribution. The density function of the Marchenko Pastur distribution is given by:
\[
\rho_{MP}(\lambda) = \frac{1}{2\pi \beta \lambda} \sqrt{(\lambda_+ - \lambda)(\lambda - \lambda_-)}, \quad \lambda \in [\lambda_-, \lambda_+],
\]
where \(\lambda_{\pm} = (1 \pm \sqrt{\beta})^2\).

The Stieltjes transform \(S(z)\) of a probability distribution \(\rho(\lambda)\) is defined as
\[
S(z) = \int \frac{\rho(\lambda)}{\lambda - z} \, d\lambda, \quad z \in \mathbb{C} \setminus \mathbb{R}.
\]
For the Marchenko Pastur distribution, the Stieltjes transform \(S_{MP}(z)\) can be derived as follows:
   \[
   S_{MP}(z) = \int_{\lambda_-}^{\lambda_+} \frac{\rho_{MP}(\lambda)}{\lambda - z} \, d\lambda
   \]
Substituting \(\rho_{MP}(\lambda)\), we have
   \begin{equation}
   S_{MP}(z) = \int_{\lambda_-}^{\lambda_+} \frac{1}{2\pi \beta \lambda} \sqrt{(\lambda_+ - \lambda)(\lambda - \lambda_-)} \frac{1}{\lambda - z} \, d\lambda.       
   \end{equation}
Using the Residue Theorem, this integral evaluates to
\begin{equation}
S_{MP}(z) = \frac{1 - \beta - z + \sqrt{(z - \lambda_-)(z - \lambda_+)}}{2 \beta z}.
\end{equation}
Here, the square root is chosen such that \(\sqrt{(z - \lambda_-)(z - \lambda_+)}\) has a branch cut on the real axis from \(\lambda_-\) to \(\lambda_+\). We have shown the derivation of the Marchenko Pastur distribution for correlation matrices with $\sigma^2=1.$

\end{appendix}

\end{document}